\numberwithin{equation}{section}
\newcommand{\ds}{\displaystyle}
\newcommand{\R}{{\mathbb{R}}}
\newcommand{\N}{{\mathbb{N}}}
\newcommand{\dx}{\,dx}
\newcommand{\ie}{{; \it i.e., }}
\let\e=\varepsilon
\let\d=\delta
\let\a=\alpha
\let\b=\beta
\let\G=\Gamma
\newtheorem{definition}{Definition}[section]
\newtheorem{lemma}[definition]{Lemma}
\newtheorem{theorem}[definition]{Theorem}
\newtheorem{proposition}[definition]{Proposition}
\theoremstyle{definition}
\newtheorem{remark}[definition]{Remark}
\begin{document}
\title[A second-order singular perturbation model for phase transitions]
{Asymptotic analysis of a second-order singular perturbation model for phase transitions}
\author[M. Cicalese, E.N. Spadaro  and C.I. Zeppieri]{Marco Cicalese, Emanuele Nunzio Spadaro and Caterina Ida Zeppieri}

\address[Marco Cicalese]{Dipartimento di Matematica e Applicazioni ``R. Caccioppoli'', Universit\`a di Napoli Federico II, Via Cintia, 80126 Napoli, Italy}
\email[Marco Cicalese]{cicalese@unina.it}

\address[Emanuele Nunzio Spadaro]{Institut f\"ur Mathematik, Universit\"at Z\"urich, Winterthurerstrasse 190, CH-8057 Z\"urich, Switzerland}
\email[Emanuele Nunzio Spadaro]{emanuele.spadaro@math.uzh.ch}

\address[Caterina Ida Zeppieri] {Institut f\"ur Angewandte Mathematik, Universit\"at Bonn, Endenicher Alle 60, 53115 Bonn, Germany}
\email[Caterina Ida Zeppieri]{zeppieri@uni-bonn.de}

\maketitle
\begin{abstract}
We consider the problem of the asymptotic description, as $\e$ tends to zero, of the functionals $F^k_\e$ introduced by Coleman and Mizel in the theory of nonlinear second-order materials\ie
\begin{equation*}
F^k_\e(u):=\int_I \left(\frac{W(u)}{\e}-k\,\e\,(u')^2+\e^3(u'')^2\right)\dx,\quad u\in W^{2,2}(I),
\end{equation*}
where $k>0$ and $W\colon\R\to[0,+\infty)$ is a double-well potential with two potential wells of level zero at $a,b\in\R$.
By proving a new nonlinear interpolation inequality,
we show that there exists a positive constant $k_0$ such that, for $k<k_0$ and for a class of potentials $W$,
$F^k_\e$ $\G(L^1)$-converges to
\begin{equation*}
F^k(u):={\bf m}_k \, \#(S(u)),\quad u\in BV(I;\{a,b\}),
\end{equation*}
where ${\bf m}_k$ is a constant depending on $W$ and $k$.
Moreover, in the special case of the classical potential $W(s)=\frac{(s^2-1)^2}{4}$,
we provide an upper bound on the values of $k$ such that the minimizers of $F_\e^k$ cannot develop oscillations on some fine scale,
thus improving previous estimates by Mizel, Peletier and Troy.
\end{abstract}

\begin{center}
\begin{minipage}{12cm}
\small{ \vspace{15pt} \noindent {\bf Keywords}:
Second order singular perturbation, phase transitions, nonlinear interpolation, $\Gamma$-convergence.

\vspace{6pt} \noindent {\it 2000 Mathematics Subject
Classification:} 49J45, 49M25, 74B05, 76A15.}
\end{minipage}
\end{center}

%\tableofcontents 

\section{Introduction}
In this note we address some features of the limiting behavior of the minimizers of a class of second-order singular perturbation energies.
The model we analyze was introduced in $1984$ by Coleman and Mizel in the context of the theory of second-order materials and was then studied in \cite{CMM} in collaboration with Marcus.

Coleman and Mizel proposed a model for nonlinear materials in which the free energy depends on both first and second order spatial
derivatives of the mass density.
In this way they expected to prove the occurrence of layered structures of the ground states (as observed in concentrated soap solutions and
metallic alloys) without appealing to non-local energies (such as, for example, the Otha-Kawasaki functional \cite{OK}).
Specifically, they introduced the free-energy functional $F^k_\e\colon L^1(I)\longrightarrow (-\infty,+\infty]$ given by 
\begin{equation}\label{intro-energy-scal}
F^k_\e(u,I)=\begin{cases} \ds \int_I \left(\frac{W(u)}{\e}-k\,\e\,(u')^2+\e^3(u'')^2\right)\dx  & \text{if}\; u\in W^{2,2}(I), \cr
+\infty & \text{if}\; u\in L^1(I)\setminus W^{2,2}(I),
\end{cases}  
\end{equation}
where $u$ (the mass density) is the order parameter of the system, $\e, k>0$ and $W\colon\R\to[0,+\infty)$ is a double-well potential with two potential
wells of level zero at $a,b\in\R$.

As $\e$ goes to zero, the functional \eqref{intro-energy-scal} accounts for the energy stored by a one-dimensional physical system occupying the bounded open interval $I$.
This model can be viewed as a scaled second-order Landau expansion of the classical Cahn-Hillard model for sharp phase transition\ie
\begin{equation*}
\min\left\{\int_IW(u)\, dx:\,u\in L^1(I),\, \fint_Iu\, dx=\lambda \,a+(1-\lambda)\,b\right\}, \quad 0<\lambda<1.
\end{equation*}
For the Cahn-Hillard model the lack of uniqueness is usually solved in the context of first-order gradient theory of phase transitions
considering the simplest diffuse phase transition model\ie the Van der Waals model.
The latter is obtained by adding a singular gradient perturbation to the previous functional.
After scaling, the new functional $\mathcal{F}_\e\colon L^1(I)\to [0,+\infty]$ reads as
\begin{equation*}
\mathcal{F}_\e(u)=\begin{cases} \ds \int_I \left(\frac{W(u)}{\e}+\e(u')^2\right)\dx & \text{if}\; u\in W^{1,2}(I), \cr
+\infty & \text{if}\; u\in L^1(I)\setminus W^{1,2}(I).
\end{cases}  
\end{equation*}
If $W$ grows at least linearly at infinity, Modica and Mortola \cite{M, MM} proved that sequences $(u_\e)$ with equi-bounded energy ({\it i.e.}~such that $\sup_\e \mathcal F_\e(u_\e)<+\infty$)
cannot oscillate as, up to subsequences, they converge in $L^1(I)$ to a function $u\in BV(I;\{a,b\})$. Moreover, the $\G(L^1)$-limit of $\mathcal F_\e$ is given by
\begin{equation}\label{per}
\mathcal{F}(u)=\begin{cases} \ds {\bf m} \#(S(u))& \text{if}\; u\in BV(I;\{a,b\}), \cr
+\infty & \text{otherwise in}\; L^1(I),
\end{cases}  
\end{equation}
for a suitable constant ${\bf m}$ depending on the double-well potential $W$.

The above phenomenon characterizes first order phase transitions of every material having positive surface energy.

On the other hand, in nature there are materials that relieve energy whenever the measure of their surface is increased.
These materials have a so-called negative surface energy.
To give a mathematical description of this kind of materials within the framework of the gradient theory of phase transitions, Coleman and Mizel introduced the energy $F_\e^k$.

The requirement for an energy of the form of $F_\e^k$ to be bounded from below forces the coefficient in front of the highest gradient squared to be nonnegative. On the other hand different phenomena can occur depending on the coefficient $k$ in front of $\e\,(u')^2$.
Specifically, for negative constants $k$, different authors showed that this model leads to the same asymptotic behavior of
the first order perturbation, avoiding oscillations and converging to a sharp interface functional.
The case $k<0$ was settled by Hilhorst, Peletier and Sch\"atzle in \cite{HPS}, where the authors proved that the functionals $F^k_\e$
$\Gamma(L^1)$-converge to a limit functional of type \eqref{per}.
The case $k=0$ was instead considered by Fonseca and Mantegazza. In \cite{FM} the authors established the same limit behavior thanks to
a compactness result for sequences with equi-bounded energy obtained exploiting some {\it a priori} bounds given by the growth assumption on the double-well potential $W$ and by a Gagliardo-Nirenberg interpolation inequality.

\medskip

In this paper we investigate the case $k>0$.

The presence of a negative contribution due to the term involving the first order derivative makes the problem quite unusual in the context of higher-order models of phase transitions.

In particular, since the three different terms in the energy are of the same order, their competition's outcomes strongly depend on the value of $k$.

Heuristically, large values of $k$ make the phases highly unstable favoring oscillations between them and correspond to negative surface tensions.

This was rigorously proved by Mizel, Peletier and Troy in \cite{MPT}. The authors considered the classical potential $W(s)=\frac{1}{4}(s^2-1)^2$
and showed that, for $k>0.9481$, $\lim_{\e\to 0}\min F^k_\e=-\infty$ and that there exists a class of minimizers of $F^k_\e$ which are non-constant periodic functions oscillating between the two potential wells. Finer properties of these minimizers have been studied also in \cite{Marcus}.

It is worth mentioning here that analogous results have been obtained for the non-local perturbations of the Van der Waals model in one-dimensional space, as the already mentioned Otha--Kawasaki model (see, for example, the forerunner study of M\"uller \cite{Mu} in the context of coherent solid-solid phase transitions).
These energies, when viewed as functionals of a suitable primitive of the order parameter of the system,
become second-order functionals with a potential constraint on the first derivative, and lead to similar results.

What was left open by the analysis carried out in \cite{MPT} is the case of ``small'', positive constants $k$.
We prove here that, under the assumptions that the potential $W(s)$ is quadratic in a neighborhood of the wells and grows at least as $s^2$
at infinity (both hypothesis being necessary as discussed in Section \ref{s:cpt}),
small values of $k$ make the phases stable and correspond to positive surface tensions\ie the asymptotic behavior of $F_\e^k$ is again described by a sharp interface limit as in \eqref{per}.

The main difficulty in the achievement of the above result lies in the proof of a compactness theorem analog to the one obtained in the case of the
Modica--Mortola functional.
Indeed, the negative term in the energy $F^k_\e$ when $k>0$ gives no \textit{a priori} bounds on minimizing sequences.
Here we solve this problem showing the existence of constants $k_0,\e_0>0$ such that a new nonlinear interpolation inequality holds
(see Lemma \ref{nonli}):
\begin{equation}\label{intro-interpolation}
k_0\int_I \e (u')^2\dx \leq \int_I\left(\frac{W(u)}{\e}+\e^3(u'')^2\right)\dx,
\end{equation}
for every $u\in W^{2,2}(I)$ and $\e\leq \e_0$.
This inequality enables us to estimate from below our functionals with $F_\e^0$ (the one corresponding to $k=0$) for which the compactness result has been proved in \cite{FM}. 
Therefore, for $k<k_0$ every sequence of functions with equi-bounded energy $F^k_\e$ converges in $L^1(I)$ (up to subsequences) to a function $u\in BV(I;\{\pm 1\})$ (see Proposition \ref{comp2}).

On account of this result, we then complete the $\G$-convergence analysis of the family of functionals $F_\e^k$ by proving in Theorem~\ref{Glimit} that, for every $k<k_0$, the functionals $F^k_\e$ $\G(L^1)$-converge to 
\begin{equation}\label{intro-Glimit}
F^k(u):=\begin{cases} \ds {\bf m}_k \, \#(S(u)) & \text{if}\; u\in BV(I;\{\pm 1\}), \cr
+\infty & \text{otherwise in}\; L^1(I),
\end{cases}
\end{equation}
where ${\bf m}_k>0$ is given by the following optimal profile problem (whose solution's existence is part of the result),
\begin{multline*}
{\bf m}_k:=\min\left\{\int_\R( W(u)-k(u')^2+(u'')^2)\dx\colon u\in W^{2,2}_{\rm loc}(\R),\right.
\\ 
\left.\lim_{x\to -\infty}u(x)=-1, \, \lim_{x\to +\infty}u(x)=1 \right\}.
\end{multline*}

In the last part of the paper we address the problem of estimating the constants $k$ for which there are no oscillations
in the asymptotic minimizers.
In order to compare our results with those in \cite{MPT}, we focus here on the explicit potential $W(s)=\frac{(s^2-1)^2}{4}$ they considered.
Since the estimate we could derive for $k_0$ are very rough, in Section \ref{sec:vs} we investigate the following different problem,
\begin{equation}\label{intro-inf}
k_1:=\inf_{L>0}\;\inf\Big\{
R_{-L}^L(u), u\in W^{2,2}(-L,L)\colon u'(\pm L)=0, u'\neq 0\Big\},
\end{equation}
where for every interval $(\alpha,\beta)$ and every $u\in W^{2,2}(\alpha,\beta)$, $R_{\alpha}^{\beta}(u)$ is the Rayleigh quotient defined as
\begin{equation}\label{intro-Ray}
R_{\alpha}^{\beta}(u):=\begin{cases}
\frac{\ds\int_{\alpha}^{\beta}(W(u)+(u'')^2)\dx}{\ds\int_{\alpha}^{\beta}(u')^2\dx} & \text{if}\; \ds\int_{\alpha}^{\beta} (u')^2\dx>0,
\cr
+\infty & \text{otherwise}.
\end{cases}
\end{equation}

Problem \eqref{intro-inf} is clearly related to the computation of the optimal constant in the nonlinear interpolation inequality
\eqref{intro-interpolation} and seems to be a challenging open problem.

Clearly, for $k\geq k_1$, the minimizers exhibit an oscillating behavior. But, more importantly, we are able to show that,
for $k<\min\{k_1,1/2\}$, there are no oscillations, because of a $L^1$ compactness result in $BV(I;\{\pm 1\})$
for sequences of functions equi-bounded in energy and having at least one zero of the first derivative (see Proposition \ref{comp-ottimale}
and notice that this condition is fulfilled by any sequence of functions which is supposed to oscillate).
This compactness result, although analogous to the previous one provided for $k<k_0$, is actually more difficult, because in this last
case the energy is not everywhere positive, but can be in principle negative in a boundary layer (see Lemma \ref{interp-bordo}).

The benefit of this more refined compactness is that we can provide an upper bound and a lower bound on $k_1$ which have the same order
of magnitude.
The lower bound we obtain for $k_1$ follows by carefully tracing the constants in the linear interpolation inequality and amounts to $1/8$;
the upper bound $k_1< 0,6846$ follows, instead, from a test with quadratic polynomials and gives an improved estimate with respect the one
given in \cite{MPT} ($k_1\leq 0,9481$).

What the present analysis does not still settle is a better understanding of the interpolation constants $k_0$ and $k_1$. We conjecture, indeed, that
for every $k<k_1$ the functionals $F_\e^k$ do not develop microstructures and $\G$-converge to a sharp interface functional of type \eqref{per} up to an additive constant depending on the presence of possible boundary layers' energies.

Similarly, we plan to address the analogous analysis in any space dimension in a future work (see Remark \ref{+D} for the proof of the
compactness in any dimension as a consequence of the one-dimensional result).

% 
% 
% 
% 
% Qui ci metto quello che dovremmo ancora aggiungere all'introduzione. 
% \begin{itemize}
% \item Dobbiamo citare il lavoro unidimensionale di M\"uller come altro esempio di funzionale che d\'a oscillazioni anche se penalizzando norme di Sobolev negative (Emanuele lo puoi fare in due secondi, visto che sei l'esperto?)
% \item Dobbiamo capire come mettere insieme due parole sul caso di dimensione piu' alta (qui non ho ancora capito cosa ci conviene dire)
% \item Dobbiamo fare qualche commento finale (problemi aperti che stiamo studiando cosi' magari non ce li fregano: il problema dell'esistenza dei boundary layers)
% \item Dobbiamo scrivere decentemente quello che ho buttato giu'!!!!
% \end{itemize}

\section{Notation and preliminaries}
In this section we set our notation and we recall some preliminary results we employ in the sequel.

With $I\subset \R$ we always denote an open bounded interval and with $\e,k$ two positive constants.
Moreover, we fix a class of double-well potentials with the following properties:
$W:\R\to[0,+\infty)$ is continuous, $W^{-1}(\{0\})=\{\pm1\}$ (the location of the wells clearly can be fixed arbitrarily),
and satisfies
\begin{itemize}
\item[(w)] there exists $c>0$ such that $W(s)\geq c\,(s\mp1)^2$ for $\pm s\geq0$.
%\item[(w2)] there exist $R, C>0$ such that $W(s)\geq C\,s^2-C$ for $|s|\geq R$.
\end{itemize}
Note that in particular the standard double-well potential $W(s)=\frac{(s^2-1)^2}{4}$ belongs to this class.

We consider the functionals $F_\e^k$ defined in \eqref{intro-energy-scal} and,
% \begin{equation*}%\label{energy-scal}
% F^k_\e(u,I):=\begin{cases} \ds \int_I \left(\frac{W(u)}{\e}-k\e(u')^2+\e^3(u'')^2\right)\dx & \text{if}\; u\in W^{2,2}(I) ,\\
% +\infty & \text{if}\; u\in L^1(I)\setminus W^{2,2}(I).
% \end{cases}  
% \end{equation*}
whenever the domain of integration is clear from the context, we simply write $F_\e^k(u)$ in place of $F_\e^k(u,I)$.
We denote by $E_\e=F_\e^0$ the functional introduced in \cite{FM}; that is
\begin{equation*}%\label{Fon-Man}
E_\e(u,I):=
\begin{cases} 
\ds \int_I \left(\frac{W(u)}{\e}+\e^3(u'')^2\right)\dx & \text{if}\; u\in W^{2,2}(I) ,\\
+\infty & \text{if}\; u\in L^1(I)\setminus W^{2,2}(I).
\end{cases}  
\end{equation*}

As we heavily use it in the sequel, we recall here the statement of one of the main results of \cite{FM} (see \cite[Proposition 2.7]{FM}).
\begin{proposition}\label{p:cptFM}
Let $(u_{\e})\subset W^{2,2}(I)$ satisfy $\sup_{\e}E_{\e}(u_{\e},I)<+\infty$. Then, there exist a
subsequence (not relabeled)
and a function $u\in BV(I,\{\pm 1\})$ such that $u_{\e}\to u$ in $L^{1}(I)$.
\end{proposition}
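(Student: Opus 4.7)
\emph{Plan.} I would proceed in the spirit of the Modica--Mortola argument, adapted to the higher-order setting. From $\sup_\e E_\e(u_\e,I)\le M$ I immediately extract the two bounds $\int_I W(u_\e)\dx\le M\e$ and $\int_I (u_\e'')^2\dx\le M\e^{-3}$. Combining the first with assumption (w) yields $\|u_\e-s_\e\|_{L^2(I)}^2\le M\e/c$, where $s_\e:=\mathrm{sign}(u_\e)\in\{\pm 1\}$, and by the (at least quadratic) growth at infinity of $W$ also $\|u_\e\|_{L^2(I)}\le C$ uniformly in $\e$. In particular $W(u_\e)\to 0$ in $L^1(I)$, and the sequence $(u_\e)$ is equi-integrable.

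\emph{Key technical step.} The crucial estimate to prove is
\begin{equation*}
\int_I \e\,(u_\e')^2\dx\le C.
\end{equation*}
Since $E_\e$ has no first-order term, this has to come from an interpolation inequality. Applying a one-dimensional Gagliardo--Nirenberg inequality between the bound $\|u_\e''\|_{L^2(I)}\le C\,\e^{-3/2}$ and an $L^q(I)$-norm of $u_\e$ --- with $q$ large enough, a condition secured by the growth of $W$ at infinity --- one chooses the exponents to balance the powers of $\e$ and obtains the desired estimate.

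\emph{Modica--Mortola lift and conclusion.} I would then set $G(s):=\int_{-1}^s\sqrt{W(t)}\dt$. By Young's inequality,
\begin{equation*}
\int_I \sqrt{W(u_\e)}\,|u_\e'|\dx\le\frac{1}{2}\int_I\left(\frac{W(u_\e)}{\e}+\e(u_\e')^2\right)\dx\le C,
\end{equation*}
so $G(u_\e)$ is uniformly bounded in $BV(I)$. Along a subsequence, $G(u_\e)\to v$ in $L^1(I)$ and a.e., with $v\in BV(I)$. Since $G$ is continuous and strictly increasing, hence a homeomorphism onto its image, the a.e.\ convergence of $G(u_\e(x))$ forces $u_\e(x)\to u(x)$ a.e., and $W(u_\e)\to 0$ a.e.\ gives $u(x)\in\{\pm 1\}$ a.e. Then $v=G(u)=\sigma\,\chi_{\{u=1\}}$, with $\sigma:=G(1)>0$, and $v\in BV(I)$ entails $u\in BV(I;\{\pm 1\})$. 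Finally, Vitali's theorem, combined with the equi-integrability of $(u_\e)$, upgrades the a.e.\ convergence of $u_\e$ to convergence in $L^1(I)$.

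\emph{Main obstacle.} The hard part is the interpolation in the second step. The elementary bound $\|u'\|_{L^2(I)}^2\le C(\|u\|_{L^2(I)}\|u''\|_{L^2(I)}+\|u\|_{L^2(I)}^2)$ together with the information at hand only yields $\int_I\e(u_\e')^2=O(\e^{-1/2})$, which is not enough. One has to upgrade the integrability of $u_\e$ exploiting the growth of $W$ at infinity (as in the original Fonseca--Mantegazza argument) and carefully balance the Gagliardo--Nirenberg exponents. This delicate balance, and its breakdown in the presence of a negative term $-k\,\e(u')^2$, is precisely what motivates the new nonlinear interpolation inequality (Lemma \ref{nonli}) introduced in the present paper to cover the case $k>0$.
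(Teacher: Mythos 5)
The paper itself does not prove this proposition: it is quoted from Fonseca--Mantegazza \cite[Proposition 2.7]{FM}, so your proposal has to stand as a self-contained argument. Its architecture (uniform bound on $\int_I\e(u_\e')^2$, then the Modica--Mortola change of variable, then Vitali) is viable, but the mechanism you propose for the key step fails. No \emph{linear} Gagliardo--Nirenberg inequality between $\|u_\e''\|_{L^2}\le C\e^{-3/2}$ and an $\e$-independent bound on $\|u_\e\|_{L^q}$ can give $\int_I\e(u_\e')^2\le C$, no matter how large $q$ is (even $q=\infty$): in one dimension the admissible exponents for $\|u'\|_{L^2}\lesssim \|u''\|_{L^2}^{\theta}\|u\|_{L^q}^{1-\theta}+\|u\|_{L^q}$ require $\theta\ge 1/2$, so the best available bound is exactly the $O(\e^{-1/2})$ you already identified, and this is sharp: $u_\e(x)=\sin(\e^{-3/4}x)$ has every $L^q$-norm bounded, $\e^3\int_I(u_\e'')^2\le C$, yet $\e\int_I(u_\e')^2\sim\e^{-1/2}$. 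Hence ``upgrading the integrability of $u_\e$'' cannot close the gap; the only reason the desired estimate is true is the smallness of $\int_I W(u_\e)$, i.e.\ the closeness of $u_\e$ to $\pm1$ in an averaged sense (the oscillating example has $\int_I W(u_\e)/\e\sim\e^{-1}$). This is precisely the content of the nonlinear interpolation Lemma \ref{nonli}: applied on intervals of length of order $\e$ as in the proof of Proposition \ref{p:stima}, it gives $k_0\,\e\int_I(u_\e')^2\le (1+o(1))\,E_\e(u_\e)$. Note that this nonlinear ingredient is already indispensable for $k=0$; it is not, as your closing remark suggests, only needed because of the negative term $-k\e(u')^2$.

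If you replace your interpolation step by this use of Lemma \ref{nonli}, the rest of your plan does yield a correct, self-contained proof (and one that differs from simply invoking \cite{FM}, which is all the paper does). Two small repairs in the Modica--Mortola part: since no upper growth condition on $W$ is assumed, work with $\tilde G(s):=\int_{-1}^{s}\min\{\sqrt{W(t)},1\}\dt$ rather than $G$, so that $|\tilde G(u_\e)|\le |u_\e+1|$ is bounded in $L^1(I)$ by your $L^2$ bound and $\tilde G(u_\e)$ is genuinely bounded in $BV(I)$; moreover $\tilde G(s)\to\pm\infty$ as $s\to\pm\infty$, so a.e.\ finiteness of the limit of $\tilde G(u_\e)$ really forces a.e.\ convergence of $u_\e$ before you invoke $W(u_\e)\to0$ a.e.\ and Vitali.
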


We also recall two classical interpolation inequalities
(see \cite[Theorem~1.2 and (1.22) pag.~ 10]{KZ} and \cite{Ga, N}).
\begin{proposition}\label{standard-interp}
For every $a,b\in\R$, $a<b$, and every function $u\in W^{2,2} (a,b)$, the following inequalities hold:

(i) (optimal constant) 
\begin{equation}\label{e:standard-int}
\|u'\|_{L^2(a,b)} \leq c \,\| u''\|_{L^2(a,b)}+k(c)\, \|u\|_{L^2(a,b)},
\end{equation}
for every $c>0$, with $k(c)=\frac{1}{c}+\frac{12}{(b-a)^2}$;

(ii) there exists a constant $c>0$ such that 
\begin{equation}\label{e:standar-int2}
\|u'\|_{L^{\frac{4}{3}}(a,b)} \leq c\,\Big(\|u\|^{\frac{1}{2}}_{L^1(a,b)} \| u''\|^{\frac{1}{2}}_{L^2(a,b)}+ \|u\|_{L^1(a,b)}\Big).
\end{equation}
\end{proposition}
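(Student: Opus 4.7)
Both estimates in Proposition \ref{standard-interp} are classical interpolation inequalities, so my plan is to sketch how I would recover each of them from first principles rather than simply quoting the references.

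For (i), the strategy is to combine integration by parts with a mean-value argument. Starting from
\begin{equation*}
\int_a^b (u')^2 \, dx = [u u']_a^b - \int_a^b u\, u'' \, dx,
\end{equation*}
I would handle the bulk term by Cauchy--Schwarz and Young's inequality $xy \leq \frac{c}{2}x^2 + \frac{1}{2c}y^2$, which naturally introduces the parameter $c>0$ that appears in the statement. For the boundary terms I would use the mean-value theorem to locate $\xi \in (a,b)$ with $u'(\xi) = (u(b)-u(a))/(b-a)$ and then represent $u'$ pointwise via $u'(x) = u'(\xi) + \int_\xi^x u''$, so that $u'$ can be bounded in terms of $\|u\|_{L^2}$ and $\|u''\|_{L^2}$ after a one-dimensional Sobolev embedding $W^{1,2}(a,b) \hookrightarrow L^\infty(a,b)$. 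Tracking all constants through this procedure produces the family of inequalities with $k(c) = 1/c + 12/(b-a)^2$; the precise constant $12$ is the delicate point and appears to require a sharp Poincar\'e/spectral estimate on the rescaled interval $(0,1)$.

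For (ii), this is a Gagliardo--Nirenberg inequality with parameters $(p,q,r,\theta) = (4/3, 1, 2, 1/2)$, as confirmed by the scaling relation $-1/4 = \frac{3}{4}-1 = (1-\theta)(-1) + \theta(2 - 1/2)$, which forces $\theta = 1/2$. My plan is to first establish the scale-invariant estimate
\begin{equation*}
\|v'\|_{L^{4/3}(\R)} \leq C \|v\|_{L^1(\R)}^{1/2} \|v''\|_{L^2(\R)}^{1/2}
\end{equation*}
for compactly supported $v \in W^{2,2}(\R)$, using either Fourier analysis or a direct H\"older-type argument applied to an integral representation of $v'$. To transfer the estimate from $\R$ to $(a,b)$, I would extend $u \in W^{2,2}(a,b)$ to a compactly supported $\tilde u \in W^{2,2}(\R)$ by a bounded linear extension operator (a standard reflection/truncation construction), which satisfies $\|\tilde u\|_{L^1(\R)} \lesssim \|u\|_{L^1(a,b)}$ and $\|\tilde u''\|_{L^2(\R)} \lesssim \|u''\|_{L^2(a,b)} + \|u\|_{L^1(a,b)}$. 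Applying the $\R$-inequality to $\tilde u$ and using $\sqrt{x+y} \leq \sqrt{x} + \sqrt{y}$ then produces the additive form of the estimate in the statement.

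The main obstacle in (i) is tracing the constants all the way through to recover precisely $12/(b-a)^2$, rather than just an order-of-magnitude bound, and this is where the real technical work lies. For (ii), the most delicate point is the extension step: one must verify that the additive correction in terms of $\|u\|_{L^1(a,b)}$ really suffices to absorb the contributions coming from the extension outside $(a,b)$, so that no higher power of $\|u\|_{L^1}$ is needed on the right-hand side.
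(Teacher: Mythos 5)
The paper does not actually prove Proposition \ref{standard-interp}: both inequalities are recalled as classical facts, (i) being quoted from Kwong--Zettl \cite{KZ} (Theorem 1.2 and (1.22)) and (ii) from Gagliardo and Nirenberg \cite{Ga, N}. So the only question is whether your sketch would constitute a complete proof, and as written it does not. For (i), the substance of the statement is the explicit constant $k(c)=\frac{1}{c}+\frac{12}{(b-a)^2}$; this is not decorative, since it is used quantitatively in Section 6, where \eqref{op-int} with exactly this $k(c)$ produces the lower bound $k_1\geq 1/8$. Your integration-by-parts plan will certainly yield an inequality of the form $\|u'\|_{L^2}\leq c\|u''\|_{L^2}+K(c,b-a)\|u\|_{L^2}$, but you concede yourself that recovering the precise $12/(b-a)^2$ ``appears to require a sharp Poincar\'e/spectral estimate'' which you do not supply, so the key quantitative step is deferred rather than proved. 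Note also that the boundary term $[uu']_a^b$ must be handled so as to avoid circularity: estimating $u'$ pointwise through an $L^\infty$ bound reintroduces the very quantity you are trying to control, and the absorption argument has to be made explicit.

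For (ii), the whole-line estimate is fine, but the transfer to $(a,b)$ has a gap: a standard bounded extension operator gives $\|\tilde u\|_{W^{2,2}(\R)}\lesssim\|u\|_{W^{2,2}(a,b)}$, i.e.\ the extended second derivative is controlled by the full $W^{2,2}$ norm, not by $\|u''\|_{L^2(a,b)}+\|u\|_{L^1(a,b)}$ as you assert. To justify your claimed bounds you first need an interval estimate of the type $\|u\|_{L^2(a,b)}+\|u'\|_{L^2(a,b)}\lesssim\|u''\|_{L^2(a,b)}+\|u\|_{L^1(a,b)}$ (itself an interpolation inequality, provable by a mean-value plus fundamental-theorem argument, with constants depending on $b-a$; this dependence is consistent with the fact that \eqref{e:standar-int2} is not scale invariant because of the additive $\|u\|_{L^1}$ term). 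Without that intermediate step the extension argument does not close. Finally, a small bookkeeping slip: the scaling identity as you wrote it, $-1/4=(1-\theta)(-1)+\theta(2-1/2)$, gives $\theta=3/10$; the correct balance is $1-\frac34=(1-\theta)(-1)+\theta\big(2-\frac12\big)$, which indeed gives $\theta=1/2$, so the exponents in the statement are right even though the displayed computation is internally inconsistent. If you only need the proposition as the paper uses it, citing \cite{KZ}, \cite{Ga}, \cite{N} is the intended route; a self-contained proof requires filling the two gaps above.
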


Finally, we prove the following interpolation inequality with boundary terms.

\begin{proposition}[Interpolation with boundary terms]\label{bdry-interp}
For every $a,b\in\R$ with $a<b$, $u\in W^{2,2}(a,b)$ and $c>0$, it holds
\begin{equation}\label{e:interpol}
c\int_a^b(u')^2\leq c^3\int_{a}^b(u'')^2+
\int_a^b \frac{(u\pm1)^2}{c}+(c\,u'(b)+u(b)\pm1)^2-(c\,u'(a)+u(a)\pm1)^2.
\end{equation}
\end{proposition}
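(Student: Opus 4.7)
The plan is to reduce the inequality to a perfect-square identity via a single integration by parts. First I substitute $v := u \pm 1$; since $v' = u'$ and $v'' = u''$, the claim becomes
\[
c\int_a^b (v')^2\dx \leq c^3\int_a^b(v'')^2\dx + \frac{1}{c}\int_a^b v^2\dx + (cv'(b)+v(b))^2 - (cv'(a)+v(a))^2.
\]

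Next I rewrite the boundary contribution through the fundamental theorem of calculus. Since $\tfrac{d}{dx}(cv'+v)^2 = 2(cv'+v)(cv''+v')$,
\[
(cv'(b)+v(b))^2 - (cv'(a)+v(a))^2 = \int_a^b\bigl[2c^2v'v''+2c(v')^2+2cvv''+2vv'\bigr]\dx.
\]
Substituting this into the inequality and collecting all terms on one side reduces the claim to showing that
\[
\int_a^b\Bigl[c^3(v'')^2+2c^2v'v''+c(v')^2+\tfrac{v^2}{c}+2cvv''+2vv'\Bigr]\dx \geq 0.
\]

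The decisive observation is that this integrand is a perfect square: expanding $(cv''+v'+v/c)^2$ and multiplying by $c$ yields exactly
\[
c\bigl(cv''+v'+\tfrac{v}{c}\bigr)^2 = c^3(v'')^2+2c^2v'v''+c(v')^2+\tfrac{v^2}{c}+2cvv''+2vv',
\]
so the integrand is pointwise non-negative and the inequality follows. Equality holds if and only if $u\pm 1$ satisfies the linear ODE $cw''+w'+w/c = 0$ a.e.

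The only non-routine step is spotting the factorisation. It is however strongly constrained by the structure of the reduced integrand: any valid decomposition must bundle the three squares $c^3(v'')^2$, $c(v')^2$ and $v^2/c$ into a single perfect square with the correct mixed terms, in particular reproducing the ``anomalous'' cross term $2vv'$ generated by the boundary expansion. These constraints essentially force the ansatz $c(cv''+v'+v/c)^2$, after which the verification is a one-line computation.
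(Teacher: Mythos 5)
Your proof is correct and is essentially the paper's own argument run in reverse: the paper starts from the pointwise identity $c^2(u')^2+(c^2u''+cu'+u\pm1)^2=c^4(u'')^2+(u\pm1)^2+c\frac{d}{dx}\bigl(cu'+u\pm1\bigr)^2$ and integrates, while you expand the boundary terms by the fundamental theorem of calculus and then recognize the same perfect square $\frac{1}{c}\bigl(c^2u''+cu'+u\pm1\bigr)^2$. No substantive difference.
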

\begin{proof}
We have the following identity
\begin{multline}\label{identity}
c^2(u')^2+(c^2 u''+c u'+u\pm1)^2
\\=c^4(u'')^2+(u\pm1)^2+2c(c u'+u\pm1)(c u''+u').
\end{multline}
Then, integrating both sides of \eqref{identity} we find
\begin{multline*}
\int_a^b c^2(u')^2\dx+\int_a^b (c^2 u''+c u'+u\pm1)^2\dx\\
=\int_a^b(c^4(u'')^2+(u\pm1)^2)\dx+c\big((c u'(b)+u(b)\pm1)^2-(c u'(a)+u(a)\pm1)^2\big).
\end{multline*}
Hence, dividing by $c>0$ we get the thesis.
\end{proof}

\section{Compactness}\label{s:cpt}
In this section we prove one of the main result of this paper, namely the existence of
a constant $k_{0}>0$ such that, for every $k<k_0$, the functional $F_{\e}^k$ satisfy
the same compactness property of Proposition \ref{p:cptFM}.
As an easy consequence, we then obtain the existence of the solution to the optimal profile problem for $F_\e^k$.

\subsection{Nonlinear interpolation and compactness}
In this subsection we prove a nonlinear version of the standard $L^2$-interpolation inequality of type (i) Proposition~\ref{standard-interp}.

\begin{lemma}[Nonlinear interpolation]\label{nonli}
There exists a constant $k_0>0$ such that
\begin{equation}\label{interpolation}
k_0\int_a^b (u')^2\dx \leq \frac{1}{(b-a)^2}\int_a^b W(u)\dx+(b-a)^2\int_a^b(u'')^2\dx,
\end{equation}
for every $u\in W^{2,2}(a,b)$ and for every $a,b \in \R$ with $a< b$.
\end{lemma}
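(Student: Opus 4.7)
The plan is to rescale to a fixed interval and then argue by contradiction, combining assumption (w) with the linear interpolation of Proposition~\ref{standard-interp}(i) and the compact embedding $W^{2,2}(0,1)\hookrightarrow C^1([0,1])$. Setting $v(y):=u(a+(b-a)y)$, a direct change of variables shows that \eqref{interpolation} on $(a,b)$ is equivalent to
\begin{equation*}
k_0\int_0^1 (v')^2\dy\leq \int_0^1 W(v)\dy+\int_0^1 (v'')^2\dy,
\end{equation*}
so it suffices to produce a universal $k_0>0$ for which this holds on $(0,1)$. Suppose by contradiction no such $k_0$ exists; pick $(u_n)\subset W^{2,2}(0,1)$ with $\int (u_n')^2>0$ and
\begin{equation*}
R(u_n):=\frac{\int_0^1 W(u_n)+\int_0^1 (u_n'')^2}{\int_0^1 (u_n')^2}\longrightarrow 0.
\end{equation*}

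Denoting by $\a>0$ the constant appearing in (w), one has $W(s)\geq \a(|s|-1)^2\geq \frac{\a}{2}s^2-\a$, whence $\|u_n\|_{L^2}^2\leq \frac{2}{\a}\int W(u_n)+2$. Combining this bound with Proposition~\ref{standard-interp}(i) furnishes constants $C_1,C_2>0$ such that
\begin{equation*}
\|u_n'\|_{L^2}^2\leq C_1\Bigl(\int_0^1 W(u_n)+\int_0^1 (u_n'')^2\Bigr)+C_2=C_1\, R(u_n)\,\|u_n'\|_{L^2}^2+C_2.
\end{equation*}
Since $R(u_n)\to 0$, this forces $\|u_n'\|_{L^2}$ to be bounded, hence $\int W(u_n)+\int (u_n'')^2\to 0$; then the $L^2$-bound above gives $(u_n)$ bounded in $W^{2,2}(0,1)$. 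By the compact embedding a subsequence converges in $C^1([0,1])$ to some $u$; uniform convergence forces $W(u)\equiv 0$, hence $u(x)\in\{\pm 1\}$ for every $x$, and continuity forces $u\equiv 1$ or $u\equiv -1$.

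Assume $u\equiv 1$ without loss of generality. Then $u_n\geq 0$ on $[0,1]$ for $n$ large, so (w) yields the \emph{homogeneous} bound $\int (u_n-1)^2\leq \a^{-1}\int W(u_n)$. Applying Proposition~\ref{standard-interp}(i) to $u_n-1$ (whose derivatives coincide with those of $u_n$) produces $C_3>0$ with $\|u_n'\|_{L^2}^2\leq C_3\bigl(\int_0^1 W(u_n)+\int_0^1 (u_n'')^2\bigr)$, that is $R(u_n)\geq 1/C_3>0$, contradicting $R(u_n)\to 0$. The main obstacle is exactly this upgrade from the a priori bound with the unavoidable additive constant $C_2$ to a purely homogeneous estimate; it is achievable only once the $C^1$-compactness has pinned the sequence near one of the wells of $W$.
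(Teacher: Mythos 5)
Your proof is correct. The rescaling to $(0,1)$ is the right normalization (it matches the homogeneity of \eqref{interpolation}), the a priori bound $\|u_n\|_{L^2}^2\leq \tfrac{2}{\a}\int W(u_n)+2$ from (w) combined with Proposition~\ref{standard-interp}(i) does force $\|u_n'\|_{L^2}$ to stay bounded once $R(u_n)\to0$, and the passage from the inhomogeneous estimate to the homogeneous one after the $C^1$-compactness has pinned $u_n$ near a single well (so that $\int (u_n\mp1)^2\leq \a^{-1}\int W(u_n)$) is exactly the point where the contradiction closes; the ``without loss of generality'' is harmless since (w) treats both wells symmetrically. However, your route is genuinely different from the paper's. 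The paper argues directly: after rescaling it sets $m:=u(1)-u(0)=\int_0^1 u'$, notes $\int_0^1(u')^2\leq 2\int_0^1(u'')^2+2m^2$, and reduces everything to bounding $m^2$; either $\int(u'')^2$ already controls $m^2$, or $|u'-m|\leq\int|u''|$ keeps $u'$ comparable to $m$, so $u$ is strictly monotone and has a fixed sign on one half-interval, where (w) and the linear interpolation inequality \eqref{e:standard-int} give $m^2\leq c\int(W(u)+(u'')^2)$. That argument is elementary, avoids any compactness, and in principle yields an explicit (if rough) value of $k_0$ by tracking constants — which is relevant to the paper's later discussion of the size of the interpolation constants. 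Your compactness-and-contradiction argument is shorter conceptually and more robust (it would adapt easily to variants of (w)), but it is non-constructive: it gives no quantitative handle on $k_0$. Both proofs ultimately rest on the same two ingredients, hypothesis (w) and Proposition~\ref{standard-interp}(i).
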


\begin{proof}
Up to translations and rescalings, it is enough to prove \eqref{interpolation} for $(a,b)=(0,1)$.
To this end, we set
$$
m:=\int_0^1 u'\dx=u(1)-u(0),
$$
and, by the symmetry of \eqref{interpolation}, up to exchanging $u$ with $-u$, we assume that $m\geq 0$.
From the fundamental theorem of calculus, it follows that,
\begin{equation}\label{l1}
|u'-m|\leq \int_0^1 |u''|\dx,% \quad \text{for every}\; u\in W^{2,2}(0,1),
\end{equation}
and hence
\begin{equation*}
\int_0^1 (u')^2\dx \leq 2 \int_0^1 (u'')^2\dx+2\,m^2.
\end{equation*}
Therefore, to prove \eqref{interpolation} it is enough to show the existence of a constant $c>0$ such that%, for every $u\in W^{2,2}(0,1)$,
\begin{equation}\label{enough}
m^2\leq c \int_0^1 \Big(W(u)+(u'')^2\Big)\dx.
\end{equation}
If $m^2\leq \frac{1}{2}\int_0^1 (u'')^2\dx$, then \eqref{enough} clearly follows.
If this is not the case, applying Jensen's inequality in \eqref{l1}, we get
\begin{equation}\label{e:u' uniform}
\frac{m}{2} \leq u' \leq \frac{3}{2}m.
\end{equation}
This implies that $u$ is strictly increasing in $(0,1)$ and, therefore, $u$ does not vanish in at least one of the two intervals $(0,1/2)$ and $(1/2,1)$.
Without loss of generality, we may assume that $u>0$ in $(0,1/2)$.
Hence, by \eqref{e:standard-int} and hypothesis (w), we have
\begin{equation}\label{standard_int}
\int_0^{\frac{1}{2}} (u')^2\dx \leq c \int_0^\frac{1}{2} ((u-1)^2+(u'')^2)\dx\leq c \int_0^1 (W(u)+(u'')^2)\dx.
\end{equation}
Since \eqref{e:u' uniform} implies $m^2 \leq 8 \int_0^{\frac{1}{2}} (u')^2\dx$,
from \eqref{standard_int} we get \eqref{enough} and thus the thesis.
\end{proof}

\begin{remark}\label{interp-on-R}
Dividing $\R$ into disjoint intervals of length $1$ and applying \eqref{interpolation} we may deduce
\begin{equation}\label{e:interp-on-R}
k_0\int_\R (u')^2\dx \leq \int_\R({W(u)}+(u'')^2)\dx,
\end{equation}
for every $u\in W^{2,2}_{\rm loc}(\R)$ with $k_0>0$ as in Lemma \ref{nonli}.
\end{remark}

Now we prove that Lemma \ref{nonli} together with a simple decomposition argument yield a lower bound for $F_\e^k$ in terms of the functional $E_\e$.

\begin{proposition}\label{p:stima}
For every interval $I$ and $\d>0$, there exists $\e_0>0$ such that, for every $k>0$, $\e\leq \e_0$, 
and $u\in L^1(I)$, 
\begin{equation}\label{stima}
\left(1-\frac{k}{k_0}-\delta\right)E_\e(u,I) \leq F_\e^k(u,I).
\end{equation}
\end{proposition}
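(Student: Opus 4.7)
The plan is to rewrite
\[
F_\e^k(u,I)=E_\e(u,I)-k\e\int_I(u')^2\,dx
\]
and to control the negative contribution $k\e\int_I(u')^2\,dx$ from above in terms of $E_\e(u,I)$ by applying the nonlinear interpolation inequality of Lemma~\ref{nonli} on a fine partition of $I$ into subintervals of length comparable to $\e$.

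If $u\notin W^{2,2}(I)$ then $F_\e^k(u,I)=E_\e(u,I)=+\infty$ and the inequality is immediate; I will therefore restrict to $u\in W^{2,2}(I)$. Given $\e$ small enough that $\e<|I|$, I partition $I$ into $n:=\lceil |I|/\e\rceil$ consecutive subintervals $J_1,\dots,J_n$ of common length $\ell:=|I|/n$. By construction $\ell\leq\e$, and the inequality $n<|I|/\e+1$ gives $\ell\geq \e|I|/(|I|+\e)$, so that the ratio $\e/\ell$ satisfies $\e/\ell\leq 1+\e/|I|\to 1$ as $\e\to0$.

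On each $J_i$, Lemma~\ref{nonli} (applied on an interval of length $\ell$) yields
\[
k_0\int_{J_i}(u')^2\,dx\leq \frac{1}{\ell^2}\int_{J_i}W(u)\,dx+\ell^2\int_{J_i}(u'')^2\,dx.
\]
Multiplying by $\e$ and using the elementary estimates $\e\ell^2\leq \e^3$ and $\e/\ell^2\leq \e^{-1}(1+\e/|I|)^2$, I obtain
\[
k_0\e\int_{J_i}(u')^2\,dx\leq \bigl(1+\e/|I|\bigr)^{2}\left(\frac{1}{\e}\int_{J_i}W(u)\,dx+\e^3\int_{J_i}(u'')^2\,dx\right)=\bigl(1+\e/|I|\bigr)^{2}E_\e(u,J_i).
\]
Summing over $i=1,\dots,n$ and multiplying by $k/k_0$ gives
\[
k\e\int_I(u')^2\,dx\leq \frac{k}{k_0}\bigl(1+\e/|I|\bigr)^{2}E_\e(u,I).
\]
Since $E_\e(u,I)\geq 0$ and $(1+\e/|I|)^2\to 1$ as $\e\to 0$, I can choose $\e_0$ small enough that $\tfrac{k}{k_0}\bigl((1+\e/|I|)^2-1\bigr)\leq\delta$ for all $\e\leq\e_0$; substituting back into the identity $F_\e^k=E_\e-k\e\int_I(u')^2\,dx$ yields~\eqref{stima}.

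The only delicate step is the choice of the partition: the intervals must be slightly shorter than $\e$ (to keep the second--derivative coefficient $\ell^2$ dominated by $\e^2$) but cannot be significantly shorter (or else the potential coefficient $\ell^{-2}$ would blow up compared to $\e^{-1}$); the equipartition into $\lceil|I|/\e\rceil$ pieces balances both requirements and forces $\e_0$ to depend only on $I$, $\delta$, and the relevant bound on $k$ through the threshold $\delta k_0/k$.
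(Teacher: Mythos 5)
Your proof is correct and follows essentially the same route as the paper: partition the domain into $\lceil |I|/\e\rceil$ intervals of length comparable to $\e$ (the paper equivalently rescales to $I/\e$ and uses $\lfloor |I|/\e\rfloor$ pieces of length slightly above $1$), apply Lemma~\ref{nonli} on each piece, sum, and absorb the $(1+\e/|I|)^2\to1$ error into $\delta$. Your explicit remark that $\e_0$ ends up depending on the bound for $k$ is an honest observation that applies equally to the paper's own argument (and is harmless in the regime $k<k_0$ where the proposition is used).
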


\begin{proof}
A change of variable gives
$$
F^k_\e(u,I)=\int_{I/\e} (W(u)-k(u')^2+(u'')^2)\dx,
$$
where $I/\e=\{x\in \R: \e\,x\in I\}$.
Set $n_\e:= \big[\frac{|I|}{\e}\big]$; we divide the interval $I/\e$ into $n_\e$ pairwise disjoint open intervals $I_\e^i$, $i=1,\ldots,n_\e$, of length $\frac{|I|}{\e\, n_\e}$. Then, by applying \eqref{interpolation} on each interval $I_\e^i$ we get
\begin{align*}
F^k_\e (u,I)&=\sum_{i=1}^{n_\e} \int_{I_\e^i}(W(u)-k(u')^2+(u'')^2)\dx
\\
&= \Big(1-\frac{k}{k_0} \frac{\e^2 n_\e^2}{|I|^2}\Big) \int_{I/\e} W(u)\dx +\Big(1-\frac{k}{k_0} \frac{|I|^2}{\e^2 n_\e^2}\Big) \int_{I/\e} (u'')^2\dx.
\end{align*}
Since
$$
\lim_{\e\to 0}\frac{\e n_\e}{|I|}=1,
$$
we get the thesis just by unscaling.
\end{proof}

The following compactness result is now an immediate consequence of Proposition \ref{p:stima} and Proposition \ref{p:cptFM}.

\begin{proposition}[Compactness]\label{comp2}
Let $k<k_0$ and let $(u_\e)\subset W^{2,2}(I)$ be a sequence satisfying $\sup_\e F^k_\e(u_\e)<+\infty$. Then there exist a subsequence (not relabeled) and a function $u\in BV(I;\{\pm 1\})$ such that $u_\e\to u$ in $L^1(I)$.
\end{proposition}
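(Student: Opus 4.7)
The proof is essentially a direct assembly of the two ingredients the authors have already put in place: the lower bound of Proposition~\ref{p:stima} and the Fonseca--Mantegazza compactness of Proposition~\ref{p:cptFM}. My plan is therefore to do little more than choose parameters carefully.

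First, since the hypothesis is $k<k_0$, I would fix $\delta>0$ small enough that
\begin{equation*}
\lambda:=1-\frac{k}{k_0}-\delta>0.
\end{equation*}
For this $\delta$, Proposition~\ref{p:stima} supplies an $\e_0>0$ such that for every $\e\leq\e_0$ and every $v\in L^1(I)$,
\begin{equation*}
\lambda\,E_\e(v,I)\leq F_\e^k(v,I).
\end{equation*}
Applying this to the sequence $(u_\e)$ and using the hypothesis $\sup_\e F_\e^k(u_\e)=:M<+\infty$, I get
\begin{equation*}
E_\e(u_\e,I)\leq \frac{M}{\lambda}\qquad\text{for every }\e\leq\e_0,
\end{equation*}
so the tail of the sequence has equi-bounded $E_\e$-energy.

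At this point I invoke Proposition~\ref{p:cptFM} directly on the sequence $(u_\e)_{\e\leq\e_0}\subset W^{2,2}(I)$: it produces a (not relabeled) subsequence and a limit $u\in BV(I;\{\pm 1\})$ with $u_\e\to u$ in $L^1(I)$, which is exactly the conclusion. I do not anticipate any genuine obstacle here, since all the analytic work, the nonlinear interpolation \eqref{interpolation} and its use to dominate $F_\e^k$ from below by a positive multiple of $E_\e$, has been carried out in Lemma~\ref{nonli} and Proposition~\ref{p:stima}; the only delicate point in this final step is making sure the strict inequality $k<k_0$ leaves room for a positive $\delta$ so that $\lambda>0$, which is precisely why the statement requires the strict bound.
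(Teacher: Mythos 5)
Your proposal is correct and is exactly the argument the paper intends: fix $\delta$ so that $1-\frac{k}{k_0}-\delta>0$, use Proposition~\ref{p:stima} to bound $E_\e(u_\e,I)$ uniformly for small $\e$, and then invoke Proposition~\ref{p:cptFM} (the paper simply states the result is an ``immediate consequence'' of these two propositions, which is what you spelled out).
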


\begin{remark}\label{+D}
Proposition \ref{p:stima} can be easily generalized to any space dimension $n$.
Namely, an immediate consequence of it is the existence of a constant $k_n\geq k_0/n>0$ such that,
for every smooth bounded domain $\Omega\subset \R^n$, $u\in W^{2,2}(\Omega)$, $k<k_n$, and $\e$ small
\begin{equation}
k\int_{\Omega}\e\,|\nabla u|^2\dx \leq
\int_\Omega \left(\frac{W(u)}{\e}+\e^3\,|\nabla^2 u|^2\right) \dx.
\end{equation}
Indeed, by a standard covering argument it is enough to discuss the case of a rectangle
$\Omega=I_1\times\cdots \times I_n$ and then the conclusion follows by an easy application of Fubini's Theorem. 
Let $\hat I_i=I_1\times\cdots\times I_{i-1}\times I_{i+1}\times\cdots \times I_n$ and $k=\frac{k_0-\d}{n}$, we have
\begin{align*}
\int_\Omega\frac{k_0-\d}{n}\,\e\,|\nabla u|^2 &=\sum_{i=1}^n \int_{\hat I_i}\int_{I_i}\frac{k_0-\d}{n}\,\e\,|\partial_i u|^2\dx_id\hat{x}_i\\
&\stackrel{\eqref{stima}}{\leq} \sum_{i=1}^n \int_{\hat I_i}\int_{I_i}\left(\frac{W(u)}{n\,\e}+\frac{\e^3}{n}\,|\partial_{ii} u|^2 \right)\dx_id\hat{x}_i\\
&\leq \int_{\Omega}\left(\frac{W(u)}{\e}+\e^3\,|\nabla^2 u|^2\right)\dx.
\end{align*}
\end{remark}

\medskip

Here we briefly comment on the assumption (w) on the double-well potential $W$.
We show with two explicit examples that the two following conditions
\begin{itemize}
\item[(i)] $\liminf_{|s|\to+\infty}\frac{W(s)}{s^2}>0$,
\item[(ii)] $\liminf_{s\to0}\frac{W(\pm1+s)}{s^2}>0$,
\end{itemize}
(which together are equivalent to (w)) are necessary to establish \eqref{stima}.

Indeed, let $l,\alpha>0$ be two parameters to be fixed later and such that $(6\,l\,\e)^{-1}\in\N$.
Consider the two families of periodic functions, of period $(6\,l\,\e)^{-1}$, $(u_\e)$ and $(v_\e)$ defined in $(0,1)$ in the following way.
On a half period, both $u_\e$ and $v_\e$ are defined by a line of slope $\alpha/\e$, an arc of parabola, and another line
of slope $-\alpha/\e$, as in the Figure \ref{f:ex}; moreover, $u_{\e}(0)=0$ and $v_\e(0)=1$ .

For the sake of simplicity, to shorten the present computation,
assume that $W$ is monotone on the intervals $(-\infty,-1)$, $(-1,0)$, $(0,1)$, $(1,+\infty)$ (note that this hypothesis is not necessary).

%%%%%%%%%%%%Figura Emanuele%%%%%%%
\begin{figure}[h!]
\centering
\input{fig2.pstex_t}
\caption{The functions $u_\e$ and $v_\e$.}\label{f:ex}
\end{figure}
%%%%%%%%%%%%%%%%%%%%%%%%%%%%

It is readily verified that:
\begin{itemize}
\item[(a)] $|u_\e''|=|v_\e''|\leq\frac{2\,\alpha}{l\,\e^2}$ always and $|u_\e'|=|v_\e'|=\frac{\alpha}{\e}$ on a set of measure $\frac{2}{3}$,

\smallskip

\item[(b)] $|u_\e|\leq 2\,l\,\alpha$, so that, for $l\,\alpha$ large enough, we have $W(u_\e)\leq W(2\,l\,\alpha)$,

\smallskip

\item[(c)] $|v_\e-1|\leq 2\,l\,\alpha$, so that $W(v_\e)\leq W(1+2\,l\,\alpha)$.
\end{itemize}
Now, if \eqref{stima} holds true, from estimates (a), (b), (c) it follows that
\begin{align*}
k_0\leq\frac{\ds\int_0^1\Big(\e^3\,(u_\e'')^2+\frac{W(u_\e)}{\e}\Big)\dx}{\ds\int_0^1 \e\,(u_\e')^2\dx}&\leq
\frac{\ds \e^3\,\left(\ds\frac{\ds 2\,\alpha}{\ds l\,\e^2}\right)^2+\frac{\ds W(2\,l\,\alpha)}{\e}}{\ds \e\,\left(\frac{\alpha}{\e}\right)^2\,\frac{2}{3}}=
\frac{6}{l^2}+\frac{3}{2}\,\frac{W(2\,l\,\alpha)}{\alpha^2},%\label{e:w1}
\\
k_0\leq\frac{\ds \int_0^1\Big(\e^3\,(v_\e'')^2+\frac{W(v_\e)}{\e}\Big)\dx}{\ds \int_0^1 \e\,(v_\e')^2\dx }&\leq
\frac{\ds \e^3\,\left(\frac{2\,\alpha}{\ds l\,\e^2}\right)^2+\frac{\ds W(1+2\,l\,\alpha)}{\e}}{\ds \e\,\left(\frac{\alpha}{\e}\right)^2\,\frac{2}{3}}\\
=\frac{6}{l^2}+\frac{3}{2}\frac{W(1+2\,l\,\alpha)}{\alpha^2}.%\label{e:w2}
\end{align*}
Then, if (i) does not hold true, taking the limit as $\alpha$ goes to $+\infty$ and then as $l$ goes to
$+\infty$ gives contradiction.
Similarly, if (ii) is not satisfied, taking the limit as $\alpha$ goes to $0$ and then
$l$ tends to $+\infty$ yields a contradiction as well.

\subsection{Optimal profile problem}
As a consequence of Lemma \ref{nonli}, we prove here the existence of a solution to the optimal profile problem for $F_\e^k$, with $k<k_0$.
Specifically, we consider the following set of functions
$$
\mathcal{A}:=\big\{f\in W^{2,2}_{\rm loc}(\R) \colon f(x)=1\; \text{if}\; x>T,\; f(x)=-1\; \text{if}\; x<-T,\; \text{for some}\; T>0\big\} 
$$
and we define
\begin{equation}\label{mk}
{\bf m}_k:=\inf\left\{\int_\R(W(f)-k(f')^2+(f'')^2)\dx\colon f\in\mathcal A\right\}.
\end{equation}
We have the following result.
\begin{proposition}[Existence of an optimal profile]\label{equiv-min}
Let $k_0$ be as in Lemma \ref{nonli}. For every $k< k_0$ the constant ${\bf m}_k$ is positive and
\begin{multline*}
{\bf m}_k:=\min\bigg\{\int_\R( W(f)-k(f')^2+(f'')^2)\dx\colon f\in W^{2,2}_{\rm loc}(\R),\\ 
\lim_{x\to -\infty}f(x)=-1, \, \lim_{x\to +\infty}f(x)=1 \bigg\}.
\end{multline*}
\end{proposition}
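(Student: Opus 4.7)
The statement bundles three claims: positivity $\mathbf{m}_k>0$, existence of a minimizer, and equality of the infimum over $\mathcal{A}$ with the one over the larger class of $W^{2,2}_{\rm loc}(\R)$-functions asymptotic to $\pm 1$. The unifying tool is \eqref{e:interp-on-R}, which, since $k<k_0$, yields the global lower bound
\begin{equation*}
\int_\R\bigl(W(f)-k(f')^2+(f'')^2\bigr)\dx\geq\Bigl(1-\frac{k}{k_0}\Bigr)\int_\R\bigl(W(f)+(f'')^2\bigr)\dx,
\end{equation*}
valid for any admissible $f$ in either class. Positivity then reduces to showing that $\inf\int_\R(W(f)+(f'')^2)\dx>0$: vanishing would force $f''\equiv 0$ and $W(f)\equiv 0$, hence $f\equiv\pm 1$, contradicting the asymptotic conditions, while a quantitative bound follows from the Fonseca--Mantegazza analysis underlying Proposition \ref{p:cptFM}.

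For attainment I would apply the direct method in the larger class. Pick a minimizing sequence $(f_n)$, translate so that $f_n(0)=0$ (each $f_n$ crosses zero by the intermediate value theorem), and extract limits. The bound above together with \eqref{e:interp-on-R} makes $\int W(f_n)\dx$, $\int(f_n')^2\dx$ and $\int(f_n'')^2\dx$ uniformly bounded, so $f_n\wto f$ in $W^{2,2}_{\rm loc}(\R)$, $f_n\to f$ locally uniformly, and $f_n'\wto f'$ weakly in $H^1(\R)$. Fatou gives $\int W(f)\dx<+\infty$; combined with $f'\in H^1(\R)\hookrightarrow C_0(\R)$ and hypothesis (w), this forces $\lim_{\pm\infty}f\in\{\pm 1\}$; and, after possibly applying the energy-preserving reflection $x\mapsto-x$, the orientation can be fixed to $-1$ at $-\infty$ and $+1$ at $+\infty$.

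The main obstacle is the negative term $-k\int(f_n')^2\dx$, which is only upper-semicontinuous under weak $L^2$-convergence. To turn it into a continuous contribution I need strong $L^2(\R)$-convergence of $f_n'$. Local strong convergence is free from Rellich; the heart of the matter is tightness at infinity. Hypothesis (w) and the $L^1$-bound on $W(f_n)$ confine each $f_n$ to a neighborhood of $\{\pm 1\}$ outside a bounded set, a confinement that can be made uniform in $n$ via a concentration-compactness step ruling out escape of energy mass. Applying Lemma \ref{nonli} on the tails then gives
\begin{equation*}
k_0\int_{|x|>R}(f_n')^2\dx\leq\int_{|x|>R}\bigl(W(f_n)+(f_n'')^2\bigr)\dx,
\end{equation*}
which is small uniformly in $n$ for $R$ large, yielding the desired strong convergence. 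Lower-semicontinuity of the full functional then follows along the extracted subsequence, so $f$ realizes the minimum in the larger class. Finally, equality of the two infima comes from a standard truncation: for $f$ in the larger class with finite energy, $f(\pm n)\to\pm 1$ and $f'(\pm n)\to 0$ (from $f'\in H^1$), so restricting $f$ to $[-n,n]$ and $C^2$-interpolating to the constants $\pm 1$ on short windows $[n,n+1]$ and $[-n-1,-n]$ produces a sequence in $\mathcal{A}$ whose energies converge to that of $f$.
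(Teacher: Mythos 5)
Your plan for the positivity and for the identification of the infimum over $\mathcal{A}$ with the infimum over the larger class is essentially the paper's (the paper makes the gluing/truncation quantitative through the auxiliary functions $G^k$, $H^k$ and the limit \eqref{lim-aux}, which you could invoke to justify that your $C^2$-interpolation windows cost $o(1)$). The genuine gap is in the attainment step, exactly where the negative term has to be controlled. You reduce lower semicontinuity to strong convergence of $f_n'$ in $L^2(\R)$, and you derive that from the claim that $\int_{|x|>R}\bigl(W(f_n)+(f_n'')^2\bigr)\dx$ is small uniformly in $n$ for $R$ large. This is a tightness statement for the positive part of the energy, and the available bounds give only that this quantity is uniformly \emph{bounded}, not uniformly small: pointwise confinement of $f_n$ near $\pm1$ outside a compact set controls neither $\int_{|x|>R}W(f_n)\dx$ nor $\int_{|x|>R}(f_n'')^2\dx$, and a fixed amount of positive energy (partly compensated by the negative term) could in principle drift to infinity along a minimizing sequence --- which is precisely the scenario in which the transition escapes and the limit fails to be admissible. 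You acknowledge this by invoking ``a concentration-compactness step ruling out escape of energy mass,'' but that step is the crux of your route and is not carried out; ruling out escape essentially requires a cut-and-paste modification of the minimizing sequence (gluing to $\pm1$ via $G^k$, $H^k$ at points where $f_n$ is close to $\pm1$ with small derivative), i.e.\ an argument of the same nature as the one you postponed.

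The paper sidesteps this difficulty entirely and never proves strong global convergence of $f_n'$. It uses only the local convergences ($f_n\to f$ and $f_n'\to f'$ uniformly on compact sets, $f_n''\rightharpoonup f''$ in $L^2_{\rm loc}$) together with the observation that, by \eqref{e:interp-on-R} applied on the half-lines $(-\infty,-T)$ and $(T,+\infty)$, the energy of $f_n$ outside $(-T,T)$ is nonnegative; hence $\int_{-T}^{T}(W(f_n)-k(f_n')^2+(f_n'')^2)\dx\le\int_\R(W(f_n)-k(f_n')^2+(f_n'')^2)\dx$, and passing to the liminf on the fixed interval and then to the supremum over $T$ yields lower semicontinuity with no tightness at all. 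If you adopt this device your scheme closes; otherwise you must actually supply the concentration-compactness argument. A secondary point: fixing the orientation by the reflection $x\mapsto-x$ does not by itself make the limit admissible --- with the normalization $f_n(0)=0$ the limit is nonconstant, but it could a priori have the same limit at $+\infty$ and $-\infty$ if the transition escapes; this is again the same issue, which the paper handles by deferring to the third step of the proof of Lemma~2.5 in \cite{FM}.
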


\noindent Before proving Proposition \ref{equiv-min}, we introduce the functions $G^k,\, H^k\colon \R^2\longrightarrow \R$ given by
\begin{multline*}
G^k(w,z):=\inf\bigg\{\int_0^1 \Big(W(g)-k(g')^2+(g'')^2\Big)\dx \colon g\in C^2([0,1]),\\
g(0)=w,\, g(1)=1,\, g'(0)=z,\, g'(1)=0  \bigg\}
\end{multline*}
and
\begin{multline*}
H^k(w,z):=\inf\bigg\{\int_0^1 \Big(W(h)-k(h')^2+(h'')^2\Big)\dx \colon h\in C^2([0,1]),\\
h(0)=-1,\, h(1)=w,\, h'(0)=0,\, h'(1)=z  \bigg\}.
\end{multline*}

If $G:=G^0$ and $H:=H^0$ are the corresponding functions for $k=0$ it is easy to check (see also \cite[Section 2]{FM}) that
$$
\lim_{(w,z)\to (1,0)} G(w,z)=\lim_{(w,z)\to (-1,0)} H(w,z)=0.
$$
Then, by the positivity of $k$ and by virtue of \eqref{interpolation} we have
$$
\Big(1-\frac{k}{k_0}\Big)G\leq G^k\leq G
\quad  \text{and} \quad 
\Big(1-\frac{k}{k_0}\Big)H\leq H^k\leq H,
$$
which lead immediately to
\begin{equation}\label{lim-aux}
\lim_{(w,z)\to (1,0)} G^k(w,z)=\lim_{(w,z)\to (-1,0)} H^k(w,z)=0\quad\forall\;k<k_0.
\end{equation}

\begin{proof}[Proof of Proposition \ref{equiv-min}]
By virtue of the nonlinear interpolation inequality of Lemma \ref{nonli}, the proof of this proposition is an easy modification of that of \cite[Lemma 2.5]{FM}.

The positivity of ${\bf m}_k$ follows from Remark \ref{interp-on-R} and \cite[Lemma 2.5]{FM}, since
$$
{\bf m}_k=\inf_{f\in\mathcal{A}}\int_\R( W(f)-k(f')^2+(f'')^2)\dx\geq \Big(1-\frac{k}{k_0}\Big)\inf_{f\in\mathcal{A}}\int_\R( W(f)+(f'')^2)\dx>0.
$$

Now we prove that ${\bf m}_k =\tilde{\bf m}_k$, where
\begin{multline*}
\tilde{\bf m}_k:=\inf\left\{\int_\R( W(f)-k(f')^2+(f'')^2)\dx\colon f\in W^{2,2}_{\rm loc}(\R),\right.
\\ 
\left.\lim_{x\to -\infty}f(x)=-1, \, \lim_{x\to +\infty}f(x)=1 \right\}.
\end{multline*}
Clearly, ${\bf m}_k \geq \tilde{\bf m}_k$. For the converse inequality, fix $\sigma >0$ and let $f$ be an admissible function for $\tilde{\bf m}_k$ such that
$$
\int_\R( W(f)-k(f')^2+(f'')^2)\dx\leq \tilde{\bf m}_k+\sigma.
$$
We show that it is possible to find two sequences $(x_j)$ and $(y_j)$ converging to $+\infty$ and $-\infty$ respectively, and such that
$$
|f'(x_j)|+|f'(y_j)|+|f(x_j)-1|+|f(y_j)+1| \to 0,
$$
as $j\to +\infty$. Indeed, in view of Remark \ref{interp-on-R} we have
$$
(k_0-k)\int_\R (f')^2\dx \leq \int_\R( W(f)-k(f')^2+(f'')^2)\dx\leq \tilde{\bf m}_k+\sigma.
$$
Thus, since $k<k_0$ we deduce that $f'\in L^2(\R)$ and so there exist two sequences of points $x_j \to +\infty$ and $y_j \to -\infty$ such that
$$
\lim_{j\to +\infty} f'(x_j)=\lim_{j\to -\infty} f'(y_j)=0.
$$
Let $g$ and $h$ be two admissible functions for $G^k(f(x_j),f'(x_j))$ and $H^k(f(y_i),f'(y_i))$, respectively, such that
$$
\int_0^1 (W(g)-k(g')^2+(g'')^2)\dx \leq G^k(f(x_j),f'(x_j))+\sigma,
$$

$$
\int_0^1 (W(h)-k(h')^2+(h'')^2)\dx \leq H^k(f(y_j),f'(y_j))+\sigma,
$$
 and set
$$
g_j(x):=g(x-x_j),\quad h_j(x):=h(x-y_j+1).
$$
We define
$$
f_j(x):=\begin{cases}
1 & \text{if}\quad x\geq x_j+1,\cr
g_j(x) & \text{if}\quad x_j\leq x \leq x_j+1,\cr
f(x) & \text{if} \quad y_j \leq x \leq x_j, \cr
h_j(x) &\text{if} \quad y_j-1 \leq x \leq y_j, \cr
-1 &\text{if}\quad x\leq y_j-1.  
\end{cases}
$$
Clearly, $f_j$ is a test function for ${\bf m}_k$ and for $k< k_0$ we have
\begin{eqnarray*}
\tilde{\bf m}_k +\sigma &\geq &\int_\R (W(f)-k(f')^2+(f'')^2)\dx \geq \int_{y_j}^{x_j}(W(f)-k(f')^2+(f'')^2)\dx\\
&=& \int_\R (W(f_j)-k(f_j')^2+(f_j'')^2)\dx - \int_{x_j}^{x_j+1} (W(g_j)-k(g_j')^2+(g_j'')^2)\dx\\
&\quad&-\int_{y_j-1}^{y_j} (W(h_j)-k(h_j')^2+(h_j'')^2)\dx\\
&\geq &{\bf m}_k-G^k(f(x_j),f'(x_j))-H^k(f(y_j),f'(y_j))-2\sigma.
\end{eqnarray*}
Hence we conclude letting $j\to +\infty$ and appealing to \eqref{lim-aux}.

Finally, it remains to prove that $\tilde{\bf m}_k$ admits a minimizer.
To this end, let $(f_n)\subset W^{2,2}_{\rm loc}(\R)$ be a sequence which realizes $\tilde{\bf m}_k$. Then, by Remark \ref{interp-on-R} we have
$$
\lim_{n\to +\infty}\Big(1-\frac{k}{k_0}\Big)\int_\R (W(f_n)+(f''_n)^2)\dx\leq \lim_{n\to +\infty}\int_\R (W(f_n)-k(f'_n)^2+(f''_n)^2)\dx=\tilde {\bf m}_k. 
$$ 
Hence, again by interpolation and appealing to the Sobolev embedding theorem, we deduce that (up to subsequence) the sequence of $C^1$ functions $(f_n)$ converges in $W^{1,\infty}_{\rm loc}(\R)$ to a $C^1$ function $f$ with
$$
\int_\R(W(f)+(f'')^2)\dx <+\infty.
$$
By \eqref{e:interp-on-R}, it follows that
\begin{equation}\label{integrability}
0\leq \int_\R(W(f)-k(f')^2+(f'')^2)\dx <+\infty.
\end{equation}
For every $T>0$, by the $W^{1,\infty}_{\rm loc}(\R)$-convergence of $(f_n)$, Fatou Lemma and the lower semicontinuity of the $L^2$-norm of the second derivative, we have
\begin{align}\label{half-line} 
\int_{-T}^T(W(f)-k(f')^2+(f'')^2)\dx &\leq \liminf_{n\to +\infty} \int_{-T}^T(W(f_n)-k(f_n')^2+(f_n'')^2)\dx\notag
\\
&\leq\liminf_{n\to +\infty} \int_\R(W(f_n)-k(f_n')^2+(f_n'')^2)\dx,
\end{align}
where the last inequality in \eqref{half-line} is a consequence of \eqref{e:interp-on-R} written for the two half lines $(-\infty,T)$ and $(T,+\infty)$. Then, taking into account \eqref{integrability} and passing to the sup on $T>0$ we get
\begin{equation}\label{min-seq}
\int_\R(W(f)-k(f')^2+(f'')^2)\dx \leq \liminf_{n\to +\infty} \int_\R(W(f_n)-k(f_n')^2+(f_n'')^2)\dx=\tilde {\bf m}_k.
\end{equation}
Thus, it remains only to show that the limit function $f$ is admissible. 
Since this is a direct consequence of the third step of the proof of \cite[Lemma 2.5]{FM}, we leave some minor details to the reader and conclude the proof.
\end{proof}

\section{$\Gamma$-convergence}
On account of the compactness result Proposition \ref{comp2}, in this section we compute the $\G$-limit of the functionals $F_\e^k$ when $k<k_0$.
%which in particular rules out the oscillatory behavior of minimizers proved by Peletier, Mizel.... in ....

\begin{theorem}\label{t:Glimit}
For every $k<k_0$, the functionals $F^k_\e$ $\G(L^1)$-converge to 
\begin{equation}\label{Glimit}
F^k(u):=\begin{cases} \ds {\bf m}_k \, \#(S(u)) & \text{if}\; u\in BV(I;\{\pm 1\}), \\
+\infty & \text{if}\;u\in L^1(I)\setminus BV(I),
\end{cases}
\end{equation}
where $\#(S(u))$ is the number of jumps of $u$ in $I$ and ${\bf m}_k$ is as in \eqref{mk}.
\end{theorem}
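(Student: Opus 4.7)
The plan is to establish Theorem \ref{t:Glimit} in the usual two halves: a \emph{limsup} inequality realised by a recovery sequence built from an optimal profile for $\mathbf{m}_k$, and a \emph{liminf} inequality obtained by localising around each jump of the $L^1$-limit and recognising the rescaled profile as an admissible competitor for $\mathbf{m}_k$.

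For the limsup, fix $u\in BV(I;\{\pm 1\})$ with $S(u)=\{x_1,\dots,x_N\}$ and $\sigma>0$. By Proposition \ref{equiv-min} (and the definition of $\mathbf{m}_k$ via the class $\mathcal A$), I pick $f\in\mathcal A$, equal to $-1$ on $(-\infty,-T]$ and to $+1$ on $[T,+\infty)$, with $\int_\R(W(f)-k(f')^2+(f'')^2)\,dy\le\mathbf{m}_k+\sigma$. I define $u_\e$ to equal $\pm f((x-x_i)/\e)$ on $[x_i-T\e,x_i+T\e]$ (with sign, and possibly a reflection $y\mapsto-y$, chosen according to the orientation of the $i$-th jump; neither transformation changes the energy) and to be the appropriate constant $\pm1$ elsewhere. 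For $\e$ small the $N$ transition intervals are disjoint, and the change of variable $y=(x-x_i)/\e$ on each of them yields
\[
F^k_\e(u_\e,I)=\sum_{i=1}^N\int_{-T}^T\bigl(W(f)-k(f')^2+(f'')^2\bigr)\,dy\le N(\mathbf{m}_k+\sigma),
\]
while $u_\e\to u$ in $L^1(I)$ because the transition set has length $O(\e)$. Letting $\sigma\to 0$ delivers the upper bound.

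For the liminf, consider $u_\e\to u$ in $L^1(I)$ with $\liminf_\e F^k_\e(u_\e)<+\infty$ and pass to a subsequence realising the liminf; Proposition \ref{comp2} forces $u\in BV(I;\{\pm1\})$, say with $S(u)=\{x_1,\dots,x_N\}$. I pick $r>0$ so that the intervals $I_i:=(x_i-r,x_i+r)$ are pairwise disjoint subsets of $I$ and $u\equiv\pm1$ on the two components of $I_i\setminus\{x_i\}$. Proposition \ref{p:stima} applied on each of the finitely many intervals composing $I\setminus\bigcup_i I_i$, with some $\delta<1-k/k_0$, gives $F^k_\e(u_\e,I\setminus\bigcup_i I_i)\ge0$ for $\e$ small, so by additivity it suffices to prove $\liminf_\e F^k_\e(u_\e,I_i)\ge\mathbf{m}_k$ for each $i$. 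Setting $v_\e(y):=u_\e(x_i+\e y)$, rescaling gives
\[
F^k_\e(u_\e,I_i)=\int_{-r/\e}^{r/\e}\bigl(W(v_\e)-k(v_\e')^2+(v_\e'')^2\bigr)\,dy,
\]
which by Remark \ref{interp-on-R} dominates $(1-k/k_0)\int_{-r/\e}^{r/\e}(W(v_\e)+(v_\e'')^2)\,dy$. Hence $v_\e''$ is equibounded in $L^2_{\rm loc}(\R)$ and $W(v_\e)$ in $L^1_{\rm loc}(\R)$; via the Sobolev embedding $W^{2,2}_{\rm loc}\hookrightarrow C^1_{\rm loc}$ and a diagonal extraction I obtain $v\in W^{2,2}_{\rm loc}(\R)$ with $v_\e\to v$ in $C^1_{\rm loc}(\R)$ and $v_\e''\wto v''$ in $L^2_{\rm loc}(\R)$.

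\textbf{The main obstacle} is to show that the limit profile $v$ is admissible for the minimum problem defining $\mathbf{m}_k$, i.e.\ $v(\pm\infty)=\pm1$ with the sign dictated by the jump of $u$ at $x_i$. By Fatou applied on $[-M,M]$ and then $M\to+\infty$, together with the lower bound from Remark \ref{interp-on-R}, we get $\int_\R(W(v)+(v'')^2)\,dy<+\infty$ and $v'\in L^2(\R)$; a standard argument (along the lines of the last step of the proof of Proposition \ref{equiv-min}) then forces $v$ to have limits at $\pm\infty$ lying in $W^{-1}(0)=\{\pm1\}$, and the correct orientation is pinned down by the $L^1(I_i)$-convergence $u_\e\to u$, which supplies sequences of points in each half of $I_i$ at which $v_\e$ is close to the prescribed well. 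Once admissibility is secured, on any fixed window $[-M,M]$ the $C^1_{\rm loc}$-convergence yields $\int_{-M}^M W(v_\e)\to\int_{-M}^M W(v)$ and $\int_{-M}^M(v_\e')^2\to\int_{-M}^M(v')^2$, while weak $L^2$ lower semicontinuity gives $\liminf_\e\int_{-M}^M(v_\e'')^2\ge\int_{-M}^M(v'')^2$; supping over $M$ yields
\[
\liminf_\e F^k_\e(u_\e,I_i)\ge\int_\R\bigl(W(v)-k(v')^2+(v'')^2\bigr)\,dy\ge\mathbf{m}_k,
\]
and summing over $i$ completes the proof. The key technical point, by contrast with the first-order Modica--Mortola framework, is precisely that the negative term $-k(u')^2$ blocks direct weak semicontinuity of the full integrand, forcing one to route both the compactness of the rescaled profiles and the identification of their limits at infinity through the nonlinear interpolation inequality of Lemma \ref{nonli}.
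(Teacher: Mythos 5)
Your $\G$-limsup half is fine and coincides with the paper's construction. The $\G$-liminf half, however, has a genuine gap exactly at the point you flag as ``the main obstacle''. You blow up at the fixed centre $x_i$ with the fixed scale $\e$, setting $v_\e(y)=u_\e(x_i+\e y)$, and you need the $C^1_{\rm loc}$-limit $v$ to be admissible for ${\bf m}_k$, i.e.\ $v(\pm\infty)=\pm1$ with the orientation of the jump. The $L^1(I_i)$-convergence $u_\e\to u$ cannot pin this down: it only provides points where $u_\e$ is near $\mp1$ at \emph{macroscopic} distance from $x_i$, and under the blow-up these points sit at $|y|\sim r/\e\to\infty$, outside every fixed window on which you control $v_\e$. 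Concretely, take $u_\e$ to be a recovery-type sequence whose transition layer is centred at $x_i+\sqrt\e$ instead of $x_i$: then $u_\e\to u$ in $L^1$, the local energies converge to ${\bf m}_k$, but $v_\e\to v\equiv-1$ in $C^1_{\rm loc}$, so $\int_\R(W(v)-k(v')^2+(v'')^2)\,dy=0$ and your final chain of inequalities gives nothing. Re-centring at points $x_{\e,i}$ where $u_\e$ crosses an intermediate value does not immediately repair this either, since the resulting profile could be a ``bump'' returning to the same well, whose energy need not dominate ${\bf m}_k$; an additional argument would be required.

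The paper avoids this identification problem altogether: it does not pass to a blow-up limit and use lower semicontinuity, but instead turns $u_\e$ itself into a competitor for ${\bf m}_k$. Using a.e.\ convergence of $u_\e$ and of $\e u_\e'$ (the latter from the bound $\|u_\e''\|_{L^2}\le c\,\e^{-3/2}$ via \eqref{e:standar-int2}), it selects points $x^-_{\e,i}<x^+_{\e,i}$ in $B(s_i,\d)$ where $u_\e$ is $\sigma$-close to $\mp1$ and $\e u_\e'$ is $\sigma$-small, cuts the rescaled $u_\e$ there, and glues on almost-minimizers of the auxiliary problems $G^k$ and $H^k$ to reach exactly $\pm1$ with zero slope. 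This exhibits an admissible test function, so that ${\bf m}_k\le\int_{x^-_{\e,i}}^{x^+_{\e,i}}\big(\tfrac{W(u_\e)}{\e}-k\e(u_\e')^2+\e^3(u_\e'')^2\big)\dx+G^k+H^k+\e$, and the error terms vanish by \eqref{lim-aux} as $\sigma\to0$; summing over the (disjoint) intervals and using the nonnegativity of $F^k_\e$ on the complement (from \eqref{stima}, as in your argument) gives the liminf. If you want to keep a blow-up/semicontinuity scheme, you would have to incorporate an analogous selection of transition points and a gluing step to guarantee the limiting profile genuinely connects $-1$ to $+1$; as written, your proof does not establish the key inequality $\liminf_\e F^k_\e(u_\e,I_i)\ge{\bf m}_k$.
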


%Before proceeding with the proof, we recall briefly the definition of $\G$-convergence for a
%family of functionals $f_\e$ defined on a metric space $X$ (see \cite{B} for example).

%\begin{definition}
%Let $f_\e,f:X\to[0,+\infty]$ be given functionals. 
%One says that the $f_\e$'s $\G$-converge to $f$ if the following is satisfied:
%\begin{itemize}
%\item[(i)] ($\G$-liminf inequality) for every $x_\e\to x$ it holds $f(x)\leq\liminf_{\e\to0}f_\e(x_\e)$,
%\item[(ii)] ($\G$-limsup inequality) for every $x\in X$, there exists $y_\e\in X$ such that
%$y_\e\to x$ and $\limsup_{\e\to0}f_\e(y_\e)\leq f(x)$.
%\end{itemize}
%\end{definition}

\begin{proof}
We divide the proof into two parts, proving the $\G$-liminf
and the $\G$-limsup inequality, respectively.

\medskip

\textit{Part I: $\G$-liminf.}
Let $u\in BV(I,\{\pm1\})$ and $(u_\e)\subset L^1$ such that $u_\e\to u$. We want to show that
\begin{equation}\label{e:liminf}
\liminf_{\e\to 0}F^k_\e(u_\e)\geq {\bf m}_k \#(S(u)).
\end{equation}
Clearly, it is enough to consider the case
$\lim_{\e\to 0}F^k_\e(u_\e)=\liminf_{\e\to 0}F^k_\e(u_\e)<+\infty$.
From \eqref{stima} we immediately deduce $\|{u''}_\e\|_{L^2(I)}\leq c \e^{-\frac{3}{2}}$, so that
\eqref{e:standar-int2} gives 
\begin{equation}\label{conv-ep-deriv}
\e\, u_\e' \to 0 \quad\text{in}\quad L^1(I). 
\end{equation}
Let $\#(S(u)):=N$, $S(u):=\{s_1,\ldots, s_N\}$ with $s_1<s_2<\ldots<s_N $, and set $\d_0:=\min\{s_{i+1}-s_i\colon i=1,\ldots N-1\}$. Fix $0<\d<\d_0/2$. Then (up to subsequences)
$$
u_\e \to u, \quad \e u'_\e \to 0 \quad \text{a.e. in}\quad B(s_i,\d),
$$
for every $i=1,\ldots, N$. Hence if we let $\sigma>0$, for every $i=1,\ldots, N$  we may find  two points $x_{\e,i}^+,x_{\e,i}^-\in B(s_i,\d)$ such that, for sufficiently small $\e>0$,
\begin{equation}\label{two-points}
|u_\e(x_{\e,i}^+)-1|<\sigma,\; |u_\e(x_{\e,i}^-)+1|<\sigma,\; |\e u'_\e(x_{\e,i}^+)|<\sigma,\; |\e u'_\e(x_{\e,i}^+)|<\sigma.
\end{equation} 
To fix the ideas, without loss of generality, suppose $x^-_{\e,i}<x^+_{\e,i}$ and set 
$$
\hat g_{\e,i}(x):= g_{\e,i}\Big(x-\frac{x_{\e,i}^+}{\e}\Big)\quad \text{and}\quad \hat h_{\e,i}(x):= h_{\e,i}\Big(x-\frac{x_{\e,i}^-+1}{\e}\Big),
$$
with $g_{\e,i}$ and $h_{\e,i}$ admissible for $G^k(u_\e(x_{\e,i}^+),\e u'_\e(x_{\e,i}^+))$ and $H^k(u_\e(x_{\e,i}^-),\e u'_\e(x_{\e,i}^-))$, respectively, and satisfying
$$
\int_0^1 \Big(W(g_{\e,i})-k(g_{\e,i}')^2+(g_{\e,i}'')^2\Big)\dx\leq G^k(u_\e(x_{\e,i}^+),\e u'_\e(x_{\e,i}^+))+\frac{\e}{2}
$$
and
$$
\int_0^1 \Big(W(h_{\e,i})-k(h_{\e,i}')^2+(h_{\e,i}'')^2\Big)\dx\leq H^k(u_\e(x_{\e,i}^-),\e u'_\e(x_{\e,i}^-))+\frac{\e}{2}.
$$
Now we suitably modify the sequence $(u_\e)$ ``far'' from each jump point $s_i$.
To this end, for every $i=1,\ldots, N$ we define on $\R$ the functions $v_{\e,i}$ as
$$
v_{\e,i}(x):=\begin{cases} 
1 & \text{if}\;  x\geq \frac{x_{\e,i}^+}{\e}+1\cr
\hat g_{\e,i}(x) & \text{if }\; \frac{x_{\e,i}^+}{\e} \leq x\leq \frac{x_{\e,i}^+}{\e}+1
\cr
u_{\e}(\e x) & \text{if }\; \frac{x_{\e,i}^-}{\e} \leq x\leq \frac{x_{\e,i}^+}{\e}
\cr
\hat h_{\e,i}(x) & \text{if }\; \frac{x_{\e,i}^-}{\e}-1 \leq x\leq \frac{x_{\e,i}^-}{\e}
\cr
-1 & \text{if }\; x\leq \frac{x_{\e,i}^+}{\e}-1.
\end{cases}
$$
Since each $v_{\e,i}$ is a test function for ${\bf m}_k$, we have
\begin{align*}
{\bf m}_k \leq{}& \int_\R (W(v_{\e,i})-k(v'_{\e,i})^2+(v''_{\e,i})^2)\dx
= \int_{\frac{x^-_{\e,i}}{\e}-1}^{\frac{x^+_{\e,i}}{\e}+1} \hspace{-0.1cm}
(W(v_{\e,i})-k(v'_{\e,i})^2+(v''_{\e,i})^2)\dx\\
\leq {}&\int_{x^-_{\e,i}}^{x^+_{\e,i}} 
\Big(\frac{W(u_\e)}{\e} -k\e (u'_\e)^2+\e^3(u''_\e)^2\Big)\dx+\\ 
&+ G^k(u_\e(x^+_{\e,i}),\e u'_\e(x^+_{\e,i}))+ H^k(u_\e(x^-_{\e,i}), \e u'_\e(x^-_{\e,i}))+\e.
\end{align*}
Then, as the intervals $(x^-_{\e,i},x^+_{\e,i})$ are pairwise disjoint for $i=1,\ldots, N$, in view of the non-negative character of $F^k_\e$ for $k<k_0$, we get 
\begin{align*}
\lim_{\e\to 0}F^k_\e(u_\e)&\geq \hspace{-0.05cm}\liminf_{\e\to 0}\sum_{i=1}^N \int_{x^-_{\e,i}}^{x^+_{\e,i}} 
\Big(\frac{W(u_\e)}{\e} -k\e (u'_\e)^2+\e^3(u''_\e)^2\Big)\dx
\\
&\geq \hspace{-0.05cm}N\, {\bf m}_k \hspace{-0.05cm}- \hspace{-0.05cm}\limsup_{\e\to 0} \sum_{i=1}^N\hspace{-0.05cm} \Big(G^k(u_\e(x^+_{\e,i}),\e u'_\e(x^+_{\e,i}))+ H^k(u_\e(x^-_{\e,i}), \e u'_\e(x^-_{\e,i}))\Big).
\end{align*}
Finally, letting $\sigma \to 0^+$, we conclude by \eqref{lim-aux}.

\medskip

\textit{Part II: $\G$-limsup.}
Let $u\in BV(I;\{\pm 1\})$ with $S(u)$ as in Part $I$, and set $s_0:=\a$, $s_{N+1}:=\beta$.
For $i=1,\ldots, N$ define $I_i:=[\frac{s_{i-1}+s_i}{2},\frac{s_{i}+s_{i+1}}{2}]$
and  $\d_0:=\min_i\{s_{i+1}-s_i\}$.

Fix $0<\d<\d_0$ and $f\in\mathcal A$ such that $f(x)=1$ if $x>T$, $f(x)=-1$ if $x<-T$, for some $T>0$, and 
$$
\int_{\R}(W(f)-k(f')^2+(f'')^2)\dx \leq {\bf m}_k +\frac{\d}{N}.
$$
Starting from this $f$ we construct a recovery sequence $(u_\e)$ for our $\G$-limit.

There exists $\e_0>0$ such that for every  $0<\e<\e_0$ we have $\frac{\d}{2\e}>T$.
For $\e<\e_0$, we define
$$
u_\e:=\begin{cases}
\ds f\Big(\frac{x-s_i}{\e}\Big) & \text{if}\; x\in I_i\; \text{and}\; [u](s_i)>0, \cr\cr 
\ds f\Big(-\frac{x-s_i}{\e}\Big) & \text{if}\; x\in I_i\; \text{and}\; [u](s_i)<0,\cr\cr
u(x) &\text{otherwise},
\end{cases}
$$
where $[u](s_i):=u(s_i)-u(s_{i-1})$, for $i=2,\ldots,N$.

It can be easily shown that $(u_\e)\subset W^{2,2}(I)$ and $u_\e \to u$ in $L^1(I)$. Moreover,
\begin{align*}
\lim_{\e\to 0} F^k_\e(u_\e)={}& \lim_{\e\to 0} \sum_{i=1}^N \int_{I_i}\Big(\frac{W(u_\e)}{\e}-k\e(u'_\e)^2+\e^3(u''_\e)^2\Big)\dx\\
={}& \lim_{\e\to 0} \Big\{ \sum_{i=1,\ldots,N \colon [u](s_i)>0} \int_{I_i/\e}(W(f(x))-k(f'(x))^2+(f''(x))^2)\dx\\
&+ \sum_{i=1,\ldots,N \colon [u](s_i)<0} \int_{I_i/\e}(W(f(-x))-k(f'(-x))^2+(f''(-x))^2)\dx\Big\}\\
%={}& \Big\{ \sum_{i=1,\ldots,N \colon [u](s_i)>0} \int_{\R}(W(f(x))-k(f'(x))^2+(f''(x))^2)\dx\\
%&+ \sum_{i=1,\ldots,N \colon [u](s_i)<0} \int_{\R}(W(f(-x))-k(f'(-x))^2+(f''(-x))^2)\dx\Big\}\\
\leq{}& {\bf m}_k N+\d = {\bf m}_k \#S(u)+\d, 
\end{align*}
hence we conclude by the arbitrariness of $\d>0$.
\end{proof}

%%%%%%%%%Parte da sviluppare%%%%%%%%%%%%%%

\section{Phase transitions vs.~oscillations}\label{sec:vs}
Throughout the last two sections we fix $W(s)=\frac{(s^2-1)^2}{4}$.

In the spirit of Mizel, Peletier and Troy \cite{MPT}, in this section we provide a compactness result, alternative to that of Proposition \ref{comp2}, which asserts the existence of a range of values of $k$ such that sequences with equi-bounded energy $F^k_\e$, whose derivative vanishes at least in one point of $I$, do not develop oscillations.  
The reason for this new compactness result, as explained in the Introduction, is to give reasonable bounds on these values of $k$.

The key parameter for our analysis is the following 
\begin{equation}\label{kappa-1}
k_1:=\inf_{L>0}\;\inf\Big\{
R_{-L}^L(u),
%\frac{\ds\int_{-L}^L(W(u)+(u'')^2)\dx}{\ds\int_{-L}^L(u')^2\dx},
u\in W^{2,2}(-L,L)\colon u'(\pm L)=0, u'\neq 0\Big\},
\end{equation}
where for every interval $(a,b)$ and every $u\in W^{2,2}(a,b)$, $R_a^b(u)$ is the Rayleigh quotient defined as
\begin{equation}\label{Ray}
R_a^b(u):=\begin{cases}
\frac{\ds\int_{a}^b(W(u)+(u'')^2)\dx}{\ds\int_{a}^b(u')^2\dx} & \text{if}\; \ds\int_a^b (u')^2\dx>0,
\cr
+\infty & \text{otherwise}.
\end{cases}
\end{equation}
It can be easily proved that $k_1$ can be equivalently rewritten as
$$
k_1=\inf_{L>0}\;\inf\left\{R_0^L(u), u\in W^{2,2}(0,L)\colon u\geq 0,\, u(0)=0,\, u'(L)=0\right\}.
$$
Clearly, for $k\geq k_1$, there are functions in the class defining $k_1$ for which the functionals $F_\e^k$ are non-positive. Therefore, there are minimizers of $F_\e^k$ developing an oscillating
structure, finer and finer as $\e$ approaches $0$. A thorough study of  oscillating minimizers has been carried out in \cite{MPT}.

On the other hand, when $k<k_1$ and the function $u\in W^{2,2}(I)$ is such that it is possible to divide its domain $I$ into subintervals $I_i:=(a_i,b_i)$ in which $u$ has constant sign, is strictly monotone, and $u(a_i)=u'(b_i)=0$, the definition of $k_1$ directly implies
\begin{equation}\label{interp-k1}
F^k_\e(u)\geq \Big(1-\frac{k}{k_1}\Big)E_\e(u).
\end{equation}
Hence, this case falls in the analysis performed in the previous sections and the development of oscillations for minimizers is ruled out, as implied by Theorem \ref{t:Glimit}.

This conclusion applies, for instance, when we prescribe homogeneous Neumann boundary conditions or periodic boundary conditions on $u$.

On the contrary, if we do not impose any boundary conditions on $u$, the estimate of the energy of $u$ in a
neighborhood of the extrema of the interval $I$ require a further investigation.

Such investigation is the main issue of this section. 
The main result asserts that for $k<\min\{k_1,1/2\}$, even without prescribing boundary
conditions, minimizers of $F^k_\e$ cannot develop an oscillatory structure.

\begin{proposition}\label{comp-ottimale}
Let $k<\min\{k_1,\frac{1}{2}\}$; let $(u_\e)\subset W^{2,2}(I)$ be a sequence such that $u_\e'=0$ at least in one point of $I$ and satisfying $\liminf_{\e\to 0}F^k_\e(u_\e)<+\infty$, then there exist a subsequence (not relabeled) and a function $u\in BV(I;\{\pm 1\})$ such that $u_\e\to u$ in $L^1(0,1)$.
\end{proposition}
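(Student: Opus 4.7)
The plan is to reduce to Proposition~\ref{p:cptFM} by establishing, for some constant $C$ independent of $\e$ and some $\theta_k<1$, an inequality of the form $F_\e^k(u_\e,I)\ge(1-\theta_k)\,E_\e(u_\e,I)-C$. The strategy is to split $I$ into an \emph{interior} piece, on which the interpolation constant $k_1$ applies, and two \emph{boundary layers} near $\partial I$, on which one cannot use $k_1$ directly but recovers the threshold $1/2$ via the boundary interpolation Proposition~\ref{bdry-interp}.

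Since $u_\e'\in W^{1,2}(I)\hookrightarrow C^0(\overline I)$ and $u_\e'$ vanishes somewhere in $I$ by hypothesis, the set $Z_\e:=\{x\in I:u_\e'(x)=0\}$ is nonempty and closed. Set $a_\e:=\min Z_\e$, $b_\e:=\max Z_\e$; then $(a_\e,b_\e)\setminus Z_\e$ decomposes into countably many maximal open intervals $(c_i,d_i)$ on which $u_\e'$ has constant sign and vanishes at both endpoints. On each such $(c_i,d_i)$, after a translation and the rescaling $y=x/\e$ the function is admissible in \eqref{kappa-1}; undoing the scaling yields
\begin{equation*}
\int_{c_i}^{d_i}\left(\frac{W(u_\e)}{\e}+\e^3(u_\e'')^2\right)\dx\ge k_1\int_{c_i}^{d_i}\e\,(u_\e')^2\dx.
\end{equation*}
Summing over $i$ (on $Z_\e$ itself both sides vanish) produces $F_\e^k(u_\e,(a_\e,b_\e))\ge(1-k/k_1)\,E_\e(u_\e,(a_\e,b_\e))$.

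On each boundary layer, say $(b_\e,\beta)$, the derivative $u_\e'$ has constant sign so $u_\e$ is monotone, and $u_\e'(b_\e)=0$. I would apply Proposition~\ref{bdry-interp} with the sign $\pm1$ chosen to match the well to which $u_\e(b_\e)$ is closest: thanks to hypothesis~(w) and the explicit form $W(s)=(s^2-1)^2/4$, on the relevant portion of the monotone layer one has $(u_\e\mp 1)^2\le 4\,W(u_\e)$ pointwise. Choosing the free parameter $c=\sqrt{2}\,\e$ so as to balance the $W(u_\e)$ and $(u_\e'')^2$ contributions, and using $u_\e'(b_\e)=0$ so that the boundary term at $b_\e$ has a sign in our favour, the resulting estimate reads
\begin{equation*}
F_\e^k(u_\e,(b_\e,\beta))\ge(1-2k)\,E_\e(u_\e,(b_\e,\beta))-k\,B_\e(\beta),
\end{equation*}
where $B_\e(\beta):=\tfrac{1}{\sqrt{2}}\bigl(\sqrt{2}\,\e\, u_\e'(\beta)+u_\e(\beta)\mp 1\bigr)^2$, and the analogue holds on $(\alpha,a_\e)$. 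The critical ingredient is that $B_\e(\alpha)+B_\e(\beta)$ is uniformly bounded in $\e$; this is the content of the forthcoming Lemma~\ref{interp-bordo}, which relies on the monotonicity of $u_\e$ on the layer and on the a~priori $F_\e^k$-bound to rule out blow-up of $u_\e$ and $\e\,u_\e'$ at the endpoints.

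Combining the three estimates,
\begin{equation*}
F_\e^k(u_\e,I)\ge\min\!\Big\{1-\tfrac{k}{k_1},\,1-2k\Big\}\,E_\e(u_\e,I)-C,
\end{equation*}
with $C$ uniform in $\e$. Since $k<\min\{k_1,1/2\}$ the prefactor is strictly positive, so $\sup_\e E_\e(u_\e,I)<+\infty$ along the subsequence realising the $\liminf$, and Proposition~\ref{p:cptFM} produces a further $L^1(I)$-converging subsequence with limit in $BV(I;\{\pm1\})$. The main obstacle is the boundary-layer step: tuning the constants to reach exactly the threshold $1/2$, and controlling the genuinely unsigned penalty $B_\e$ in the absence of prescribed boundary data on $u_\e$ — precisely what distinguishes this refined compactness from Proposition~\ref{comp2}.
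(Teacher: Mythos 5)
Your high-level reduction is exactly the paper's: split $I$ into an interior part controlled by the constant $k_1$ and two boundary layers controlled by the interpolation with boundary terms, deduce $F^k_\e(u_\e)\geq c_k\,E_\e(u_\e)-C$, and invoke Proposition \ref{p:cptFM}. The interior step is fine as you set it up: decomposing $\{u_\e'\neq 0\}\cap(a_\e,b_\e)$ into maximal intervals with $u_\e'=0$ at both endpoints and using \eqref{kappa-1} after rescaling is legitimate (the paper instead decomposes into intervals where $u_\e$ and $u_\e'$ have constant sign with $u_\e(a_i)=u_\e'(b_i)=0$, using the second characterization of $k_1$; both give the factor $1-k/k_1$).

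The gap is in the boundary-layer step, which is where the whole difficulty of the result sits. First, the pointwise comparison $(u_\e\mp1)^2\leq 4\,W(u_\e)$ with a \emph{single} choice of sign on the whole monotone layer is false whenever $u_\e$ changes sign there: for $W(s)=\frac{(s^2-1)^2}{4}$ one has $(s-1)^2\leq 4W(s)$ only for $s\geq 0$ (or $s\leq-2$), and symmetrically for the other sign, so a layer running, say, from $-\tfrac12$ to $\tfrac12$ admits no admissible sign. One must split the layer at the zero $a$ of $u_\e$, and this creates an interior boundary term $(\sqrt2\,\e\,u_\e'(a)\pm1)^2$ which is not a priori bounded; the paper removes it by playing the two applications of \eqref{e:interpol} on $(\a,a)$ and $(a,b)$ against each other via $(A+1)^2\leq(1+\d)(A-1)^2+4(1+\tfrac1\d)$, and this is precisely the origin of the factor $(1+\eta)$ and of the additive constant $\eta^{-s}$ in Lemma \ref{interp-bordo}. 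Second, your orientation of \eqref{e:interpol} places the unfavourable (positive) boundary term at the outer endpoint $\b$, so $B_\e(\b)$ involves $u_\e(\b)$ and $\e u_\e'(\b)$, about which nothing is known; claiming that $B_\e(\a)+B_\e(\b)$ is uniformly bounded ``by the a priori $F^k_\e$-bound'' is circular, since coercivity of $F^k_\e$ is exactly what is being proved, and it also mis-states Lemma \ref{interp-bordo}: that lemma does not assert a uniform bound on any boundary penalty, but the weaker (and sufficient) estimate $F^k_\e\geq(1-2(1+\eta)k)E_\e-\eta^{-s}$ on the layer, obtained by reflecting so that the uncontrolled outer endpoint carries the \emph{negative} boundary term, and by absorbing the remaining term $(u_\e(b)-1)^2$ at the zero of $u_\e'$ into $\eta\,E_\e+C/\eta$ through the Jensen/Young argument of \eqref{basta} (indeed $u_\e(b)$ itself need not be bounded). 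With that corrected lemma your concluding step works, except that the prefactor is $\min\{1-\tfrac{k}{k_1},\,1-2(1+\eta)k\}$ and one uses $k<\tfrac12$ to choose $\eta$ small; as written, however, the key inequality is asserted rather than proved.
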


The proof of Proposition \ref{comp-ottimale} is a  straightforward consequence of Proposition \ref{stima-ottimale} below and of
Proposition~\ref{p:cptFM}.

\begin{remark}
Unfortunately, at this stage it is still unclear if in Proposition \ref{comp-ottimale} taking the minimum between $k_1$ and $1/2$ is really necessary or it is a technical hypothesis.
\end{remark}

\begin{proposition}\label{stima-ottimale}
For every $k<\min\{k_1,\frac{1}{2}\}$ there exist two constants $C_k, C>0$ such that
\begin{equation}\label{e:stima-ottimale}
F^k_\e(u)\geq C_k FM_\e(u)-C,
\end{equation}
for every $\e>0$ and for every $u\in W^{2,2}(I)$ such that $u'$ vanishes at least in one point of
$I$.
\end{proposition}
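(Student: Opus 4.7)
The plan is to exploit $u'(x_0)=0$ to split $I$ into ``interior'' subintervals (where the $k_1$-inequality directly yields the desired bound) and a ``boundary'' subinterval near $\partial I$ (handled via Proposition \ref{bdry-interp}). Fix $x_0\in I$ with $u'(x_0)=0$; by symmetry it suffices to work on $[x_0,\beta]$ and add the analogous estimate on $[\alpha,x_0]$.

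Let $Z\subset[x_0,\beta]$ be the closed set where $u\cdot u'=0$ (so $x_0\in Z$). On each connected component of $[x_0,\beta]\setminus Z$, $u$ is strictly monotone and of constant sign. Refining, I obtain subintervals $(a_j,b_j)$, $j=1,\ldots,N$, with $(a_N,b_N)=(c,\beta)$, such that for $j<N$ one endpoint of $(a_j,b_j)$ is a zero of $u$ and the other a zero of $u'$. On each such interior $(a_j,b_j)$, the rescaled $v(y):=u(\e y)$ (after a possible sign reflection) is admissible in the equivalent formulation of the problem defining $k_1$ (namely $v\geq 0$ on $(0,L)$ with $v(0)=0$ and $v'(L)=0$), whence
\begin{equation*}
\int_{a_j}^{b_j}\Big(\tfrac{W(u)}{\e}+\e^3(u'')^2\Big)\,dx\;\geq\;k_1\,\e\int_{a_j}^{b_j}(u')^2\,dx,
\end{equation*}
so that $F^k_\e\geq(1-k/k_1)E_\e$ locally on $(a_j,b_j)$.

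On the remaining ``boundary'' component $(c,\beta)$, apply Proposition \ref{bdry-interp} with parameter $\e\sqrt\lambda$ ($\lambda>0$ to be chosen) and sign $\mp 1$ matching the constant sign of $u$. Using $(u\mp1)^2\leq 4W(u)$ (valid for $W(s)=(s^2-1)^2/4$), dividing by $\sqrt\lambda$ and multiplying by $k$ yields
\begin{equation*}
k\e\int_c^\beta(u')^2\,dx\;\leq\;k\lambda\int_c^\beta\e^3(u'')^2\,dx+\tfrac{4k}{\lambda}\int_c^\beta\tfrac{W(u)}{\e}\,dx+\tfrac{k}{\sqrt\lambda}\bigl(B(\beta)-B(c)\bigr),
\end{equation*}
with $B(x):=(\e\sqrt\lambda\,u'(x)+u(x)\mp 1)^2$. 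The coefficients $1-k\lambda$ and $1-4k/\lambda$ are simultaneously positive precisely when $k<1/2$ (optimally at $\lambda=2$): this is the origin of the threshold. Discarding the nonnegative $B(c)$ leaves a local bound $F^k_\e(u,(c,\beta))\geq C'_k\,E_\e(u,(c,\beta))-(k/\sqrt\lambda)B(\beta)$ with $C'_k>0$.

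The main obstacle is then to absorb the boundary term $B(\beta)$ into a universal constant $C$. This is a boundary-layer analysis exploiting the monotonicity and constant sign of $u$ on $(c,\beta)$: if $|u(\beta)\mp1|$ is large then, by continuity and monotonicity, $u$ remains close to $u(\beta)$ on a region whose length is bounded below in terms of $\|u'\|_{\infty}$, forcing $\int_c^\beta W(u)/\e$ to be at least of order $(u(\beta)\mp1)^2$; similarly, $(\e u'(\beta))^2$ is controlled via the fundamental theorem of calculus applied to $u'$ starting from the point in $Z$ where $u'$ vanishes. Together these give a bound of the form $B(\beta)\leq\delta\,E_\e(u,(c,\beta))+C_\delta$ for arbitrary $\delta>0$, and choosing $\delta$ small enough allows absorption into the interior coefficients, leaving only an additive constant. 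This is the content of Lemma \ref{interp-bordo}. Summing the interior estimates on $(a_j,b_j)$, $j<N$, with the boundary estimate on $(c,\beta)$, and adding the symmetric inequality on $[\alpha,x_0]$, completes the proof.
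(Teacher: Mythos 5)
Your overall architecture (zero-set decomposition, interior intervals handled by the $k_1$-inequality \eqref{interp-k1}, a boundary piece handled by Proposition \ref{bdry-interp} with parameter of order $\varepsilon$, which is indeed the source of the threshold $k<1/2$) follows the paper, and the interior part is essentially fine --- up to the minor slip that a component of $\{u\,u'\neq0\}$ may have $u'=0$ at both endpoints with $u$ never vanishing, so ``one endpoint a zero of $u$'' cannot always be arranged; such components are, however, directly admissible in the original definition \eqref{kappa-1}, so this is easily repaired. The genuine gap is in the boundary layer. Applying \eqref{e:interpol} only on the last monotonicity interval $(c,\beta)$ leaves the surviving boundary term $B(\beta)=(\varepsilon\sqrt{\lambda}\,u'(\beta)+u(\beta)\mp1)^2$ at the endpoint of $I$, where neither $u(\beta)$ nor $u'(\beta)$ is constrained. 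Your proposed control of $(\varepsilon u'(\beta))^2$ ``by the fundamental theorem of calculus from the point where $u'$ vanishes'' fails: that point can be at distance of order $|I|$ from $\beta$ (e.g.\ $u$ strictly monotone on all of $(x_0,\beta)$), and FTC plus Cauchy--Schwarz only gives $(\varepsilon u'(\beta))^2\leq\varepsilon^2(\beta-z)\int(u'')^2\leq\frac{\beta-z}{\varepsilon}\,E_\varepsilon(u)$, whose prefactor blows up as $\varepsilon\to0$; this is not a bound of the form $\delta E_\varepsilon+C_\delta$ uniform in $\varepsilon$, which is what you need in order to absorb $B(\beta)$ when $k$ is close to $1/2$. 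Likewise the control of $|u(\beta)\mp1|$ through ``$\|u'\|_\infty$'' is not quantified by the energy. So the one step that makes the proposition nontrivial is left unproved, and attributing it to Lemma \ref{interp-bordo} is inaccurate: that lemma bounds a different quantity.

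The paper never faces an uncontrolled derivative at $\partial I$. In Lemma \ref{interp-bordo} the boundary layer runs from the endpoint $\alpha$ up to the \emph{first zero $b$ of $u'$}; the inequality \eqref{e:interpol} is applied separately on $(\alpha,a)$ and $(a,b)$ (with $a$ the first zero of $u$) with the \emph{same} parameter $\sqrt2\,\varepsilon$, so that the term at $\alpha$ carries the harmless sign, the uncontrollable terms involving $u'(a)$ at the junction cancel up to a factor $(1+\delta)$ (estimate \eqref{delta}), and the only surviving bad term is $(u(b)-1)^2$ at a point where $u'(b)=0$; this is then absorbed as $\delta E_\varepsilon + C\delta^{-s}$ in \eqref{basta}, using the quartic growth of $W$ (Young and Jensen) together with $u'(b)=0$ to bound $(u(b)-u(b-\nu\varepsilon))^2$ by $\nu^3\varepsilon^3\int(u'')^2$. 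If you wish to keep your single-interval scheme on $(c,\beta)$, you could reverse the orientation (apply \eqref{e:interpol} to $x\mapsto u(\beta+c-x)$) so that the bad term sits at $c$; this works when $u'(c)=0$, since the term reduces to $(u(c)\mp1)^2$ and a \eqref{basta}-type argument applies, but when $u(c)=0$ and $u'(c)\neq0$ you are forced to extend the layer to the next zero of $u'$ and use the junction cancellation --- i.e.\ precisely the paper's Lemma \ref{interp-bordo}. To make your route self-contained you would instead have to prove a trace-type bound of the form $(\varepsilon\sqrt{\lambda}\,u'(\beta)+u(\beta)\mp1)^2\leq\delta E_\varepsilon(u,(\beta-\varepsilon,\beta))+C_\delta$ (plausible for this quartic $W$, but nowhere established in your sketch).
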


The following lemma is the main ingredient in the proof of Proposition \ref{stima-ottimale}.

\begin{lemma}\label{interp-bordo}
Let $u\in W^{2,2}(I)$ and let $a, b\in I:=(\a,\b)$, $a<b$  be the smallest points in $I$ such that $u(a)=u'(b)=0$. Then, there exists $s>0$ such that, for every $\eta\in (0,1)$ we have
\begin{equation}\label{e:interp-bordo}
F^k_\e(u;(\a,b))\geq (1-2(1+\eta)k)FM_\e(u;(\a,b))-\frac{1}{\eta^s}.
\end{equation}
\end{lemma}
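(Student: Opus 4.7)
The plan is to split $(\a, b)$ at $a$ and apply the boundary interpolation inequality of Proposition~\ref{bdry-interp} on each sub-interval, with a common parameter $c=\mu\e$ (where $\mu:=\sqrt{2(1+\eta)}$) and with the sign $\pm 1$ chosen to match the sign of $u$. By the choice of $a$ and $b$, $u'$ does not vanish on $(\a, b)$ and so $u$ is monotone there; since $u(a)=0$, $u$ has opposite signs on $(\a, a)$ and $(a, b)$, so without loss of generality I assume $u\leq 0$ on $(\a, a)$ and $u\geq 0$ on $(a, b)$. The explicit form $W(s)=(s^2-1)^2/4$ then yields $(u+1)^2\leq 4W(u)$ on $(\a,a)$ and $(u-1)^2\leq 4W(u)$ on $(a,b)$, since $4W(s)=(s-1)^2(s+1)^2\geq (s\mp1)^2$ on $\{\pm s\geq 0\}$.

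Applying Proposition~\ref{bdry-interp} on $(\a, a)$ with the $+1$ sign (dropping the non-positive boundary term at $\a$) and on $(a, b)$ with the $-1$ sign (using $u'(b)=0=u(a)$), summing the two, and invoking the algebraic identity $(x+1)^2-(x-1)^2=4x$ at $x=\mu\e u'(a)$, I obtain the intermediate estimate
\begin{equation*}
\int_\a^b \e(u')^2\dx \leq 2(1+\eta)\,FM_\e(u;(\a, b)) + 4\e u'(a) + \frac{(u(b)-1)^2}{\mu},
\end{equation*}
where the choice $\mu^2=2(1+\eta)$ makes both $\mu^2$ and $4/\mu^2$ bounded by $2(1+\eta)$ (using $1/(1+\eta)\leq 1+\eta$). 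Since $F^k_\e(u;(\a,b))=FM_\e(u;(\a,b))-k\int_\a^b\e(u')^2\dx$, this immediately produces the required main term $(1-2(1+\eta)k)FM_\e$, and what remains is to dominate the boundary surplus $4k\e u'(a)+\frac{k}{\mu}(u(b)-1)^2$ by a quantity of the form $\eta^{-s}$.

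To control $(u(b)-1)^2$, I would combine the growth $W(s)\geq c(s-1)^2$ with the first-order Modica--Mortola chain $\int_a^b(W(u)/\e+\e(u')^2)\dx\geq 2\int_0^{u(b)}\sqrt{W(s)}\,ds$ and the $k_1$-bound $\int_a^b\e(u')^2\dx\leq k_1^{-1}FM_\e(u;(a,b))$: since $\int_0^M\sqrt{W(s)}\,ds\gtrsim M^3$ for large $M$, one obtains $(u(b)-1)^2\lesssim FM_\e^{2/3}$, hence $(u(b)-1)^2\leq\eta\,FM_\e+C\eta^{-2}$ by Young. To control $\e u'(a)$, observe that since $u'(b)=0$ we have $\e u'(a)=-\int_a^b\e u''\dx$; a localized trace estimate on $u'$ at scale $\e$ gives $(\e u'(a))^2\leq C\bigl(\int_a^{a+\e}\e(u')^2\dx + FM_\e(u;(a,a+\e))\bigr)$, and a further Young's inequality yields $4\e u'(a)\leq\eta\int_\a^b\e(u')^2\dx+\eta\,FM_\e+C\eta^{-r}$ for some $r>0$. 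Re-absorbing the $\eta\int_\a^b\e(u')^2\dx$ term into the left-hand side of the intermediate estimate and the $\eta\,FM_\e$ contributions into the main coefficient via a minor redefinition of $\eta$ concludes the proof, with $s$ determined by the exponents appearing in the Young inequalities.

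The main obstacle is this final absorption step: the \emph{pointwise} boundary quantities $\e u'(a)$ and $(u(b)-1)^2$ must be controlled by the \emph{integral} energy $FM_\e$ uniformly in $u$ and $\e$. The required bridge between pointwise and integral quantities comes from combining the second-order structure of the energy with the first-order Modica--Mortola-style heuristic (justified here by the $k_1$-inequality on $(a,b)$) and a Sobolev embedding applied on a scale comparable to $\e$.
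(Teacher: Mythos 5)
Your proposal is correct, and its first half coincides with the paper's proof: split at $a$, apply Proposition \ref{bdry-interp} on $(\a,a)$ and $(a,b)$ with the sign matched to the sign of $u$ and $c$ of order $\e$, and discard the non-positive boundary term at $\a$; your intermediate estimate with $\mu^2=2(1+\eta)$ checks out. Where you genuinely diverge is in the treatment of the two boundary surpluses, which is the heart of the lemma. For the term at $a$, the paper never bounds $u'(a)$ at all: it writes $(\sqrt2\e u'(a)+1)^2\le(1+\d)(\sqrt2\e u'(a)-1)^2+4(1+1/\d)$ and uses the $-(\sqrt2\e u'(a)-1)^2$ coming from the interval $(a,b)$ to cancel it, paying only the factor $(1+\d)$ and an additive $C/\d$; you instead keep the exact surplus $4\e u'(a)$ from the identity $(x+1)^2-(x-1)^2=4x$ and control it by an $\e$-scale trace inequality for $u'$ plus Young and re-absorption of $\eta\int\e(u')^2$ into the left-hand side. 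This works (note you must treat the case $b-a<\e$ separately, via $u'(b)=0$ and Cauchy--Schwarz, as your remark $\e u'(a)=-\int_a^b\e u''$ suggests), but it is an extra step the paper's cancellation avoids. For the term at $b$, the paper proves $u(b)^2\le\d\,FM_\e+1/\d$ by a purely local case analysis at scale $\nu\e$ near $b$ (fundamental theorem of calculus when the increment over $(b-\nu\e,b)$ is large, Jensen on the potential plus monotonicity otherwise), whereas you use the global Modica--Mortola bound $\int_a^b(W(u)/\e+\e(u')^2)\gtrsim u(b)^3-C$ together with the $k_1$-interpolation on $(a,b)$ to absorb the $\e(u')^2$ part, then Young; this is valid, at the price of importing $k_1>0$ (available non-circularly, e.g.\ from the elementary bound $k_1\ge1/8$ of the last section) into the additive constant, while the paper's argument is self-contained. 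Both routes end with the same rescaling of $\eta$ and absorption of multiplicative constants into $\eta^{-s}$, a looseness your write-up shares with the paper's own proof.
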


\begin{proof}
Let $\a<a<b<\b$, with $a, b$ as in the statement, thus $u$ is monotone in $(\a,b)$ and has constant sign in $(a,b)$. By the symmetry property of the problem, without loss of generality, we assume $u$ strictly increasing in $(\a,b)$ and $u>0$ in $(a,b)$. Therefore, we are in the following hypotheses:
$$
u>0\; \text{ in }\; (a,b),
\quad u'> 0\; \text{ in }\; (\a,b),  \quad u(a)=u'(b)=0.
$$
(see also Figure \ref{F:bordo}).

%%%%%%%%Fig.1%%%%%%%%%%
\begin{figure}[h!]
\centering \psfrag{u}{{$u$}} \psfrag{al}{$\a$}
\psfrag{a}{$a$} \psfrag{b}{{$b$}}
\includegraphics[scale=0.4]{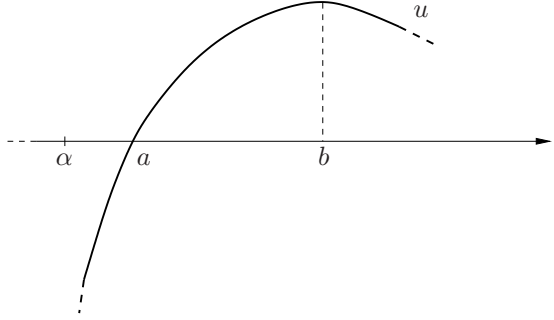}
\caption{The function $u$ in a neighborhood of $\a$.}\label{F:bordo}
\end{figure}
%%%%%%%%%%%%%%%%%%%%%%%

\noindent The interpolation inequality \eqref{e:interpol} on the interval $(\a,a)$, with $c^2=2\e^2$, gives
\begin{equation}\label{i-alfaa}
\frac{\e}{2}\int_\a^a(u')^2\dx \leq \e^3\int_\a^a (u'')^2+\int_\a^a\frac{(u+1)^2}{4\e}+\frac{\sqrt 2}{4}(\sqrt 2 \e u'(a)+1)^2,
\end{equation}
while the same computation on $(a,b)$ yields
\begin{equation}\label{i-ab}
\frac{\e}{2}\int_a^b(u')^2\dx \leq \e^3\int_a^b (u'')^2+\int_a^b\frac{(u-1)^2}{4\e}+\frac{\sqrt 2}{4}(u(b)-1)^2-\frac{\sqrt 2}{4}(\sqrt 2 \e u'(a)-1)^2.
\end{equation}
From \eqref{i-ab} we deduce
\begin{equation}\label{a-primo}
\frac{\sqrt 2}{4}(\sqrt 2 \e u'(a)-1)^2\leq \e^3\int_a^b (u'')^2-\frac{\e}{2}\int_a^b(u')^2\dx +\int_a^b\frac{(u-1)^2}{4\e}+\frac{\sqrt 2}{4}(u(b)-1)^2.
\end{equation}
Since for every $\d\in (0,1)$ we have $(A+B)^2\leq (1+\d)A^2+(1+\frac{1}{\d})B^2$, we may write
\begin{equation}\label{delta}
(\sqrt 2 \e u'(a)+1)^2 \leq (1+\d)(\sqrt 2 \e u'(a)-1)^2+4\Big(1+\frac{1}{\d}\Big).
\end{equation}
Then, gathering \eqref{i-alfaa} and \eqref{delta}, we find
\begin{multline}\label{sabato}
\frac{\e}{2}\int_\a^a(u')^2\dx \leq \e^3\int_\a^a (u'')^2+\int_\a^a\frac{(u+1)^2}{4\e}+\\
+\frac{1}{2\sqrt 2}(1+\d)(\sqrt 2 \e u'(a)-1)^2+\sqrt 2\Big(1+\frac{1}{\d}\Big).
\end{multline}
By estimating in \eqref{sabato} the quantity $(\sqrt 2 \e u'(a)-1)^2$ with \eqref{a-primo}, we get
\begin{multline*}
\frac{\e}{2}\int_\a^a(u')^2\dx\leq (1+\d)\int_\a^b \Big(\frac{(u-1)^2}{4\e}+\e^3 (u'')^2\Big)\dx+\\
-\frac{\e}{2}\int_a^b (u')^2\dx+C\,(u(b)-1)^2+\frac{C}{\d}.
\end{multline*}
Thus finally
\begin{equation}\label{en-bordo}
\frac{\e}{2}\int_\a^b(u')^2\dx\leq(1+\d)\int_\a^b \Big(\frac{(u-1)^2}{4\e}+\e^3 (u'')^2\Big)\dx+C\,u(b)^2+\frac{C}{\d}.
\end{equation}
Then, to get the thesis for $\eta=c\,\d$ for a suitable constant $c>0$, it is suffice to show that
\begin{equation}\label{basta}
u^2(b) \leq \d \int_\a^b \Big(\frac{W(u)}{\e}+\e^3 (u'')^2\Big)\dx+\frac{1}{\d} 
\end{equation} 
for every $\d\in (0,1)$. 

We prove \eqref{basta}. Consider $\nu\in(0,1)$ to be fixed later.
If $b-a\leq \nu\e$, by exploiting $u'(b)=0$ and the fundamental theorem of calculus, we have
\begin{align*}
u^2(b)&=(u(b)-u(a))^2\leq (b-a) \int_a^b (u')^2\dx \leq  {(b-a)^3}\int_\a^b (u'')^2\dx\\
& \leq {\nu^3} \e^3\int_\a^b (u'')^2\dx, 
\end{align*}
from which \eqref{basta} follows with $\d=\nu^3$.

So now suppose $b-a>\nu\e$. Again we distinguish two cases.
If
\begin{equation}\label{ip}
(u(b)-u(b-\nu\e))^2> \frac{u^2(b)}{4},
\end{equation}
then, arguing as above we find
$$
(u(b)-u(b-\nu\e))^2\leq \nu^3 \e^3\int_\a^b (u'')^2\dx,
$$
thus \eqref{ip} directly yields \eqref{basta} again with $\d=\nu^3$.

If \eqref{ip} does not hold true,
then, from $u^2(b)/2\leq (u(b)-u(b-\nu\e))^2+u^2(b-\nu\e)$,
%$$
%(u(b)-u(b-\nu\e))^2\geq \frac{u^2(b)}{2}-u^2(b-\nu\e),
%$$
to get \eqref{basta} it is enough to estimate $u^2(b-\nu\e)$.

By the Young Inequality $\nu^2 A^2+\frac{B^2}{\nu^2}\geq 2AB$ we have
$$
\nu^2 \int_\a^b \frac{W(u)}{\e}\dx +\frac{1}{\nu^2} \geq 2\Big(\int_\a^b \frac{W(u)}{\e}\dx\Big)^{1/2} \geq 2\Big(\int_{b-\nu\e}^b \frac{(u^2-1)^2}{4\e}\dx\Big)^{1/2}.
$$
On the other hand, using the Jensen Inequality we find
$$
\int_{b-\nu\e}^b (u^2-1)^2\dx \geq \frac{1}{\nu\e}\Big(\int_{b-\nu\e}^b (u^2-1)\dx \Big)^2.
$$
Therefore
$$
\nu^2 \int_\a^b \frac{W(u)}{\e}\dx +\frac{1}{\nu^2} \geq \frac{1}{\nu^{1/2}}\int_{b-\nu\e}^b \frac{u^2-1}{\e}\dx \geq {\nu^{1/2}}(u^2(b-\nu\e)-1),
$$
where in the last inequality we also used the fact that $b-\nu\e>a$,  and $u, u' > 0$ in $(a,b)$.
Eventually we have
$$
u^2(b-\nu\e) \leq \nu^{3/2} \int_\a^b \frac{W(u)}{\e}\dx +\frac{1}{\nu^{5/2}}+1.
$$
Taking $\d=c\,\nu^{3/2}$ for a suitable constant $c>0$, \eqref{basta} follows and thus the thesis.
\end{proof}

\begin{proof}[Proof of Proposition \ref{stima-ottimale}]
The proof is straightforward combining Lemma \ref{interp-bordo} and \eqref{interp-k1}.
Indeed, let $I=(\alpha,\beta)$: then, \eqref{e:interp-bordo} applies for two suitable neighboring intervals of $\a$ and $\b$, while \eqref{interp-k1} applies on internal intervals $(a_i,b_i)$ such that $u, u'$ have constant sign in $(a_i,b_i)$ and $u(a_i)=u'(b_i)=0$.
\end{proof}

\section{Estimates on the interpolation constant $k_1$}

\noindent In order to compare our results with those by Mizel, Peletier and Troy \cite{MPT}, in  this section we provide two estimates, one from below and one from above, on the interpolation constant $k_1$, the one from above improving their bound. 

To establish an estimate from below on $k_1$, the idea is to use the interpolation inequality (i) of Proposition \ref{standard-interp}, which gives a good bound on $k_1$ on ``large'' intervals, and to combine it with an inequality of Jensen type which is good on ``small'' intervals.  

\medskip

For every $u\in W^{2,2}(0,L)$ with $u'(L)=0$ we have
\begin{equation}\label{Jen}
\int_0^L (u')^2\dx \leq \frac{L^2}{2}\int_0^L (u'')^2\dx.
\end{equation}
Indeed, for every $x\in (0,L)$ we have
$$
|u'(x)|^2\leq \Big(\int_x^L|u''(t)|\,dt\Big)^2\leq (L-x)\int_0^L|u''(t)|^2\,dt,
$$
thus integrating on $(0,L)$ gives \eqref{Jen}. 

Then, recalling the definition of $R_0^L(u)$ \eqref{Ray}, by \eqref{Jen} we get the first bound 
\begin{equation}\label{1st-bd}
\inf_{u}R_0^L(u)\geq \frac{2}{L^2}, \quad \text{for every}\; L>0.
\end{equation}
On the other hand, Proposition \ref{standard-interp} (i) gives
\begin{equation}\label{op-int}
\int_0^L (u')^2\dx \leq 8 c^2 \int_0^L \frac{(u-1)^2}{4}\dx+2\Big(\frac{1}{c}+\frac{12}{L^2}\Big)^2\int_0^L(u'')^2\dx.  
\end{equation}
Hence gathering \eqref{Ray} and \eqref{op-int} yields the second bound
\begin{equation}\label{2nd-bd}
\inf_{u}R_0^L(u)\geq \left(\max\left\{8c^2,2\Big(\frac{1}{c}+\frac{12}{L^2}\Big)^2\right\}\right)^{-1}, \quad \text{for every}\; c,L>0.
\end{equation}
Finally, combining \eqref{1st-bd} and \eqref{2nd-bd}, recalling the definition of $k_1$ and choosing $c=1$, an explicit calculation yields
$$
k_1\geq \inf_{L>0}\max\left\{\frac{2}{L^2},\left(\max\left\{8,2\Big({1}+\frac{12}{L^2}\Big)^2\right\}\right)^{-1}\right\}=\frac{1}{8}.
$$

Concerning the estimate from above, we test the value of the Rayleigh quotient
$R_0^L$ on the quadratic polynomials $u(x)=h^2-\frac{h^2}{L^2}(x-L)^2$, with $h,L>0$.
Then, a straightforward computation gives
\begin{align*}
I_1& :=\int_{0}^LW(u(x))\dx= (1260)^{-1}\,L\,\big(128\,h^8-336\,h^4+315\big),\\
I_2& :=\int_{0}^L(u'')^2= 4\,\frac{h^4}{L^3}\quad \text{and}\quad I_3 :=\int_{0}^L(u')^2= \frac{4\,h^4}{3\,L}.
\end{align*}
Then, minimization of $\frac{I_1+I_2}{I_3}$ on $h>0$ and $L>0$ yields the bound $k_1\leq 0,6846$.

\begin{remark}
A slightly better upper bound on $k_1$ can be obtained testing $R_0^L$ on functions as in Figure \ref{f:ex}. Nevertheless, since the value we find in this way ($k_1\leq 0,6637$) is again larger than $1/2$, this does not substantially improve the statement of Proposition~\ref{stima-ottimale}, hence we omit this further computation.
\end{remark}

\medskip

\noindent{\sc Acknowledgements.}
The authors thank Andrea Braides for having drawn their attention on this problem and Sergio Conti for many interesting discussions and suggestions.

The work by M.~C. was partially supported by the European Research Council under FP7,
Advanced Grant n.~226234 ``Analytic Techniques for Geometric and Functional Inequalities".

The work by E.~N.~S. was supported by the Forschungskredit der Universit\"at Z\"urich n.~57103701.

C.~I.~Z. acknowledges the financial support of INDAM, Istituto Nazionale di Alta Matematica ``F. Severi'', through a research fellowship for Italian researchers abroad.


\begin{thebibliography}{22}{\baselineskip=10pt\frenchspacing

%\bibitem{B} A. Braides, {\it $\Gamma$-convergence for Beginners}, Oxford University Press, Oxford, 2002.

\bibitem{CMM} {\sc B.D. Coleman, M. Marcus, V.J. Mizel}: On the thermodynamics of Periodic Phases, {\it Arch. Rational Mech. Anal.} {\bf 117} (1992), 321-347.

\bibitem{FM} {\sc I. Fonseca, C. Mantegazza}: Second order singular perturbation models for phase transitions, {\it SIAM J. Math. Anal.} {\bf 31} (2000) no. 5, 1121-1143. 

\bibitem{Ga} {\sc E. Gagliardo}: Propriet\`a di alcune classi di funzioni in pi\`u variabili, {\it Ricerche Mat.} {\bf 7} (1958), 102-137. 

\bibitem{G} {\sc M.E. Gurtin}: Some results and conjectures in the gradient theory of phase transitions, in ``Metastability and incompletely posed problems, Proc. Workshop, Minneapolis/Minn. 1984/85'', {\it IMA Vol. Math. Appl.} {\bf 3} (1987), 135-146.

\bibitem{HPS} {\sc D. Hilhorst, L.A. Peletier, R. Sch\"atzle}: $\G$-limit for the extended Fischer-Kolmogorov equation, {\it Proc. Roy. Soc. Edinburgh Sect. A} {\bf 132} (2002), no. 1, 141-162.

\bibitem{KZ} {\sc M.K. Kwong, A. Zettl}, {\it Norm Inequalities for Derivatives and Differences}, Lecture Notes in Mathematics 1536, Springer-Verlag, Berlin, 1992.

\bibitem{LM} {\sc A. Leizarowitz, V. J. Mizel}:  One-dimensional infinite-horizon variational problems arising in continuum mechanics.
{it Arch. Rational Mech. Anal.} {\bf 106} (1989), no. 2, 161-194.

\bibitem{Marcus} {\sc M. Marcus}: Universal properties of stable states of a free energy model with small parameter, {\it Calc. Var. Partial Differential Equations} {\bf 6} (1998), 123-142.

\bibitem{MPT} {\sc V. J. Mizel, L. A. Peletier, W. C. Troy}: Periodic phases in second-order materials.  {\it Arch. Ration. Mech. Anal.}  {\bf 145}  (1998),  no. 4, 343-382.

\bibitem{M} {\sc L. Modica}: The gradient theory of phase transitions and the minimal interface criterion, {\it Arch. Rational Mech. Anal.} {\bf
98} (1987), 123-142.

\bibitem{MM} {\sc L. Modica, S. Mortola}: Un esempio di $\Gamma$-convergenza, {\it Boll. Un. Mat. Ital.} {\bf 14-B} (1977), 285-299.


\bibitem{Mu} {\sc S. M\"uller}: Singular perturbations as a selection criterion for periodic minimizing sequences, {\it Calc. Var. Partial Differential Equations} {\bf1} (1993), 169-204.

\bibitem{N} {\sc L. Nirenberg}: On elliptic partial differential equations, {\it Ann. Scuola Norm. Sup. Pisa} (3) {\bf13} 
(1959), 115-162. 

\bibitem{OK} {\sc T. Otha, K. Kawasaki}: Equilibrium morphology of diblock copolymer melts, {\it Macromolecules} {\bf19} 
(1986), 1621-2632.


}

\end{thebibliography}
\end{document}